\RequirePackage{fix-cm}
\def\version{7 October 2021}

\documentclass[smallextended]{svjour3a} 

\smartqed  
\usepackage{graphicx}

%
%
%




\newcommand{\ac}[1]{\textcolor{red}
{{\rm [\![}\mbox{\sc{AC}$\blacktriangleright\!\!\blacktriangleright$}: #1{\rm ]\!]}}}

\usepackage{mathtools}
\usepackage{latexsym,epsfig,bm}
\usepackage{upgreek}
\usepackage{mathrsfs}
\usepackage{times}
\usepackage{amsmath}
\usepackage{amssymb}

\usepackage{amsthm}
\usepackage{upref}
\usepackage{esint}

\usepackage{comment}
\usepackage[usenames,dvipsnames]{color}
\usepackage{graphicx}
\definecolor{MyDarkBlue}{rgb}{0,0.08,0.45}
\definecolor{MyDarkGreen}{rgb}{0,0.75,0}
\definecolor{Pomegranade}{rgb}{0.6,0.1,0.15}
\definecolor{purple}{rgb}{0.6,0.1,0.15}
\usepackage[backref=none]{hyperref}
\hypersetup{pdfborder={0 0 0},
  colorlinks,
  urlcolor={MyDarkBlue},
  linkcolor={MyDarkBlue},
  citecolor={MyDarkBlue},
  breaklinks=true}
\providecommand{\url}[1]{\small\textcolor{blue}{#1}}
\usepackage{doi}
\providecommand{\eprint}[1]{}
\renewcommand{\eprint}[1]{arXiv:\href{http://arxiv.org/abs/#1}{#1}}

\usepackage{ulem} 
\normalem

\providecommand{\eqref}[1]{{\rm (\ref{#1})}}
\providecommand{\itref}[1]{{\it (\ref{#1})}}
\DeclareSymbolFont{AMSb}{U}{msb}{m}{n}
\DeclareSymbolFontAlphabet{\mathbb}{AMSb}

\makeatletter\let\c@lemma\relax\makeatother

\newtheoremstyle{opusit}
  {}
  {}
  {\it}
  {}
  {\bfseries}
  {\ \ }
  { }
  {\thmname{#1}\thmnumber{ #2}\thmnote{ (#3)}}

\newtheoremstyle{opus}
  {}
  {}
  {}
  {}
  {\bfseries}
  {\ \ }
  { }
  {\thmname{#1}\thmnumber{ #2}\thmnote{ (#3)}}

\newtheoremstyle{opustext}
  {}
  {}
  {}
  {}
  {\it}
  {\ \ }
  { }
  {\thmname{#1}\thmnumber{ #2}\thmnote{ (#3)}}

\theoremstyle{opusit}
\newtheorem{lemma}{Lemma}[section]
\newtheorem{theorem}[lemma]{Theorem}

\theoremstyle{opus}
\newtheorem{definition}[lemma]{Definition}
\newtheorem{assumption}[lemma]{Assumption}
\newtheorem{example}[lemma]{Example}
\theoremstyle{opustext}
\newtheorem{remark}[lemma]{Remark}

\makeatletter\@addtoreset{definition}{section}
\makeatletter\@addtoreset{equation}{section}
\makeatletter\@addtoreset{example}{section}
\makeatletter\@addtoreset{lemma}{section}
\makeatletter\@addtoreset{proposition}{section}
\makeatletter\@addtoreset{remark}{section}
\makeatletter\@addtoreset{theorem}{section}
\makeatother


\newcommand{\unity}{\textrm{{\usefont{U}{fplmbb}{m}{n}1}}}

\newcommand{\fra}[2]{{#1}/{#2}}

\providecommand{\longhookrightarrow}{\lhook\joinrel\longrightarrow}


\DeclareSymbolFont{EUB}{U}{eur}{b}{n}
\SetSymbolFont{EUB}{bold}{U}{eur}{b}{n}
\DeclareSymbolFontAlphabet{\eub}{EUB}

\newcommand{\dom}{\mathfrak{D}}
\newcommand{\range}{\mathfrak{R}}
\renewcommand{\ker}{\mathop{\mathbf{ker}}}

\newcommand{\jj}{\mathrm{i}}
\newcommand{\End}{\,{\rm End}\,}

\newcommand{\frakM}{\mathfrak{M}}

\newcommand{\scrB}{\mathscr{B}}
\newcommand{\scrC}{\mathscr{C}}

\newcommand{\scrQ}{\mathscr{Q}}

\newcommand{\bfB}{\mathbf{B}}

\newcommand{\bfE}{\mathbf{E}}
\newcommand{\bfF}{\mathbf{F}}
\newcommand{\bfG}{\mathbf{G}}
\newcommand{\bfH}{\mathbf{H}}

\newcommand{\bfX}{\mathbf{X}}

\newcommand{\vargamma}{\mathrm{g}}
\newcommand{\hatA}{\hat{A}}

\newcommand{\notyet}[1]{{}}

\newcommand{\rank}{\mathop{\rm rank}}

\newcommand{\supp}{\mathop{\rm supp}}

\newcommand{\p}{\partial}
\newcommand{\at}[1]{\vert\sb{\sb{#1}}}

\newcommand{\R}{\mathbb{R}}
\providecommand{\C}{\mathbb{C}}
\renewcommand{\C}{\mathbb{C}}

\newcommand{\N}{\mathbb{N}}

\newcommand{\abs}[1]{\vert #1 \vert}

\newcommand{\norm}[1]{\Vert #1 \Vert}

\newcommand{\sothat}{\,{\rm :}\;}
\newcommand{\slim}{\mathop{\mbox{\rm s-lim}}}
\newcommand{\wlim}{\mathop{\mbox{\rm w-lim}}}

\DeclareMathSymbol{\varGamma}{\mathord}{letters}{"00}
\DeclareMathSymbol{\varDelta}{\mathord}{letters}{"01}
\DeclareMathSymbol{\varTheta}{\mathord}{letters}{"02}
\DeclareMathSymbol{\varLambda}{\mathord}{letters}{"03}
\DeclareMathSymbol{\varXi}{\mathord}{letters}{"04}
\DeclareMathSymbol{\varPi}{\mathord}{letters}{"05}
\DeclareMathSymbol{\varSigma}{\mathord}{letters}{"06}
\DeclareMathSymbol{\varUpsilon}{\mathord}{letters}{"07}
\DeclareMathSymbol{\varPhi}{\mathord}{letters}{"08}
\DeclareMathSymbol{\varPsi}{\mathord}{letters}{"09}
\DeclareMathSymbol{\varOmega}{\mathord}{letters}{"0A}
\newcommand{\dist}{\mathop{\rm dist}\nolimits}
\renewcommand{\Re}{\mathop{\rm{R\hskip -1pt e}}\nolimits}

%

\numberwithin{equation}{section}

\journalname{Annales math\'{e}matiques du Qu\'{e}bec}

\newcommand{\shortto}[1][3pt]{\mathrel{%
   \hbox{\hskip 1pt\rule[\dimexpr\fontdimen22\textfont2-.2pt\relax]{#1}{.4pt}}%
   \mkern-6mu\hbox{\usefont{U}{lasy}{m}{n}\symbol{41}}}}

\newenvironment{borrowed}
  {\list{}{\listparindent 0cm%
  \baselineskip 0pt
  \setlength{\leftmargin}{0.3cm}
  \setlength{\rightmargin}{0.2cm}
  }%
  \item\relax\it}
  {\endlist}

\makeatletter
\def\@biblabel#1{[{#1}]}
\makeatother

\begin{document}

\title{Limiting absorption principle and virtual levels
of operators in Banach spaces
}


\author{Nabile Boussaid         \and
        Andrew Comech
}


\institute{N. Boussaid \at
Laboratoire Math\'{e}matiques, 
Universit\'e Bourgogne Franche-Comt\'e,
25030 Besan\c{c}on CEDEX, France \\
              \email{nabile.boussaid@univ-fcomte.fr}           
           \and
           A. Comech \at
Texas A\&M University, College Station, Texas, USA;
Laboratory 4, IITP, Moscow, Russia \\
              \email{comech@math.tamu.edu}
}

\date{\version}

\maketitle

\begin{abstract}
We review the concept of the limiting absorption principle
and its connection to virtual levels of operators in Banach spaces.

\medskip

\noindent\textsc{R\'esum\'e.}
Nous passons en revue le principe d'absorption limite
et sa relation avec les niveaux virtuels pour des op\'erateurs dans les espaces de Banach.

\keywords{limiting absorption principle
 \and nonselfadjoint operators
 \and threshold resonances
 \and virtual levels
 \and virtual states
}
\PACS{02.30.Tb \and 02.30.Jr}
\subclass{35P05 \and 47Axx \and 47B01}
\end{abstract}

\bigskip
\bigskip

\hfill
{\it To Alexander Shnirelman on the occasion of his 75th birthday}


\section{Limiting absorption principle}

The idea of introducing
a small absorption into the wave equation
for selecting particular solutions
goes back to Ignatowsky \cite{ignatowsky1905reflexion}
and is closely related to the Sommerfeld radiation condition
\cite{sommerfeld1912greensche}.
We start with the Helmholtz equation
\begin{eqnarray}\label{264-1}
-\Delta u-z u=f(x)\in L^2(\R^3),
\qquad u=u(x),\quad x\in\R^3.
\end{eqnarray}
For $z\in\C\setminus\overline{\R_{+}}$,
equation
\eqref{264-1} has a unique $L^2$-solution
$(-\Delta-z I)^{-1}f$,
with $(-\Delta-z I)^{-1}$
represented by the convolution with
$\fra{e^{-\abs{x}\sqrt{-z}}}{(4\pi\abs{x})}$,
$\Re\sqrt{-z}>0$.
For $z\ge 0$, there may be no
$L^2$-solution;
moreover, when $z>0$, there are different solutions
of similar norm, and one faces the question of
choosing an appropriate one.
The way to select a unique solution
is known as the \emph{radiation principle}.
V.I.\,Smir\-nov,
in his widely renowned ``Course of higher mathematics''
\cite{smirnov1941course},
credits the radiation principle
to V.S.\,Ig\-na\-towsky \cite{ignatowsky1905reflexion}
and to A. Sommerfeld \cite{sommerfeld1912greensche};
the work of Ignatowsky
was also publicized by A.N.\,Tikhonov,
both in his lectures at mechmat at Moscow State University
and in the textbook written jointly with A.A.\,Samarskii \cite{tikhonov1951equations}
(and even in their '1950 Russian translation
of A.\,Sommerfeld's textbook \cite{sommerfeld1948partielle}).
In \cite{ignatowsky1905reflexion},
Ignatowsky considered the electromagnetic field
scattered by a wire
using the expression $Z(t,x)=A e^{\jj(\omega t-\varkappa x)}$
for the electric field,
with $\omega$ and $\varkappa$ certain parameters.
The absorption in the medium corresponded to
$\varkappa$ having nonzero imaginary part;
its sign was taken into account
when choosing an appropriate solution
to the Helmholtz equation.
Following this idea,
A.G.\,Sveshnikov, a student of Tikhonov,
specifies in \cite{sveshnikov1950radiation} a solution to \eqref{264-1} by
\begin{eqnarray}\label{lap}
u(x)=\lim\sb{\varepsilon\to 0+}
\big(-\Delta-(z+\jj\varepsilon)I\big)^{-1}f,
\qquad
k>0,
\end{eqnarray}
calling this approach \emph{the limiting absorption principle} (LAP)
and attributing it to Ignatowsky.\footnote{We suppose that
in the twenties and thirties,
between \cite{ignatowsky1905reflexion}
and \cite{smirnov1941course},
the idea of the \emph{limiting absorption principle}
was being refined
when
V.S.\,Ignatowsky worked at St.\,Petersburg University,
where in particular he taught mathematical theory of diffraction
and likely was in contact with V.I.\,Smirnov.
Let us mention that, besides his work on diffraction,
Ignatowsky is known
for his contributions to the theory of relativity
(see \cite{vizgin1987reception})
and for developing optical devices
while heading the theoretical division
at GOMZ, the State Association
for Optics and Mechanics (which later became known as ``LOMO'').
On 6 November 1941, during the blockade of St.\,Petersburg,
Ignatowsky was arrested by NKVD
(an earlier name of KGB),
as a part of the ``process of scientists'',
and shot on 30 January 1942.
(During this process,
V.I.\,Smirnov was ``credited'' by NKVD the role of a Prime Minister
in the government after the purportedly planned
coup;
Smirnov avoided the arrest because he was evacuated from St.\,Petersburg
in August 1941, shortly before the blockade began.)
As a result, Ignatowsky's name has been unknown to many:
the reference to his article disappeared
from Smirnov's ``Course of higher mathematics''
until post-1953 editions
(see e.g. the English translation \cite[\S230]{smirnov1964course}).

\qquad
Russians are used to such
rewrites of the history,
joking about the ``History of the history of the Communist Party'',
a reference to a mandatory and ever-changing Soviet-era ideological course
in the first year of college.
As the matter of fact, the very ``Course of higher mathematics''
mentioned above
was started by V.I. Smirnov together with J.D. Tamarkin,
with the first two volumes
(published in 1924 and 1926)
bearing both names,
but after Tamarkin's persecution by GPU
(another earlier name of KGB)
and his escape from the Soviet Union with smugglers
over frozen lake Chudskoe in December 1924 \cite{hille1947jacob},
Tamarkin's authorship eventually had to disappear.
His coauthor Smirnov
spent the next year pleading
with the authorities
(and succeeding!)
for Tamarkin's wife Helene Weichardt
-- who tried to follow her husband's route
with the smugglers over the icy lake but was intercepted
at the border and jailed --
to be released from prison
and allowed to leave the Soviet Union to join her husband
\cite{apushkinskaya2018cherish}.
}
We note that \eqref{lap} leads to
\begin{eqnarray}\label{u-x}
u(x)\mathop{\sim}\limits\sb{r\to\infty}
\ \lim\sb{\varepsilon\to +0}
e^{\jj r\sqrt{z+\jj\varepsilon}}
=e^{\jj k r},
\qquad
k=z^{1/2}>0,
\qquad
r=\abs{x},
\end{eqnarray}
where the choice of the branch of the square root in the exponent
is dictated by the need to avoid the exponential growth at infinity.
Sveshnikov points out that Ignatowsky's approach
does not depend on the geometry of the domain
and hence is of more universal nature
than that of A. Sommerfeld \cite{sommerfeld1912greensche},
which is the selection of the solution to \eqref{264-1}
satisfying the \emph{Sommerfeld radiation condition}
\begin{eqnarray}\label{Sommerfeld}
\lim\sb{r\to\infty}
r\Big(
\frac{\p u}{\p r}-\jj k u
\Big)=0,
\end{eqnarray}
in agreement with \eqref{u-x}.

Let us also mention
the \emph{limiting amplitude principle} \cite{tikhonov1948radiation}
(the terminology also introduced in \cite{sveshnikov1950radiation})
which specifies a solution to \eqref{264-1}
by
$
u(x)=\!\!\lim\limits\sb{t\to+\infty}\!\!\psi(x,t)e^{\jj k t}$,
where $\psi(x,t)$ is a solution to the wave equation
\begin{eqnarray}\label{264-1a}
\p_t^2\psi-\Delta \psi=f(x)e^{-\jj k t},
\qquad
t>0;
\qquad
\psi\at{t=0}=0,
\qquad
\p_t \psi\at{t=0}=0.
\end{eqnarray}
Thus, $u$ is the limiting amplitude of the periodic vibration
building up
under the action of a periodic force for a long time.
This corresponds to using the retarded Green function,
represented by the convolution with
$G_{\mathrm{ret}}(x,t)=\frac{\delta(t-\abs{x})}{4\pi\abs{x}}$,
yielding the solution to \eqref{264-1a} in the form
\[
\psi(x,t)=G_{\mathrm{ret}}\ast (f(x) e^{-\jj k t})
=
\iiint_{\abs{x-y}<t}
\frac{f(y)e^{-\jj k (t-\abs{x-y})}}{4\pi\abs{x-y}}\,dy
\sim e^{\jj k\abs{x}-\jj k t}
,
\]
in agreement both
with
the limiting absorption principle~\eqref{lap} (cf.~\eqref{u-x})
and with the Sommerfeld radiation condition~\eqref{Sommerfeld}.

Presently, a common meaning of the LAP
is the existence of a limit of the resolvent
at a given point of the essential spectrum.
While the resolvent of $A:\,\bfX\to\bfX$
cannot have a limit at the essential spectrum
as an operator in $\bfX$,
it can have a limit as a mapping
\[
(A-z I)^{-1}:\,\bfE\to\bfF,
\]
where $\bfE$ and $\bfF$ are some Banach spaces
such that the embeddings
$\bfE\hookrightarrow\bfX\hookrightarrow\bfF$
are dense and continuous.
Historically,
this idea could be traced back to
eigenfunction expansions
\cite{weyl1910gewohnliche,carleman1934theorie,titchmarsh1946eigenfunction}
and
Krein's method of \emph{directing functionals}
\cite{krein1946general,krein1948hermitian}
(see \cite[Appendix II.7]{akhiezer1981theory}).
This was developed in
\cite{povzner1950method,povzner1953expansion,gelfand1955eigenfunction,berezanskii1957eigenfunction,birman1961spectrum}
(see also \emph{rigged spaces} in \cite[I\S4]{gelfand1961some},
also known as \emph{equipped spaces}
and related to \emph{Gelfand's triples} from \cite{gelfand1955eigenfunction}).
Gradually the theory takes the form
of estimates on the limit of the resolvent
at the essential spectrum
in certain spaces;
this further development becomes clearer in
\cite{eidus1962principle,vainberg1966principles,eidus1969principle}
(the convergence of the resolvent
is in the sense of distributions),
then in
\cite[Lemma 6.1]{rejto1969partly}
(where certain spaces are introduced),
and finally in
\cite[Theorem 2.2]{agmon1970spectral},
\cite[Theorem 4.1]{MR0320547} (for Dirac operators),
and \cite[Appendix A]{agmon1975spectral},
where the convergence of the resolvent is specified
with respect to weighted $L^2$ spaces.
See also \cite{MR528757}
and
\cite{benartzi1987limiting}.
Let us also mention that
in \cite{agmon1998perturbation}
this same approach --
to consider the resolvent as a mapping from $\bfE$ to $\bfF$,
with the embeddings $\bfE\hookrightarrow\bfX\hookrightarrow\bfF$
being dense and continuous -- is used to define \emph{resonances}
of an operator as poles of the analytic continuation
of its resolvent.

\begin{remark}
\label{remark-roma}
Such an approach is not universal
since such a definition of resonances depends on the
choice of regularizing spaces $\bfE$, $\bfF$.
By \cite[Proposition 4.1]{agmon1998perturbation},
the set of resonances is the same if $\bfE_i$ and $\bfF_j$,
$i=1,\,2,$ satisfy the following additional assumptions:
\begin{itemize}
\item[(I)]
{\it
The set $\bfE_1\cap\bfE_2$ (identified with a subset of $\bfX$)
is dense in both $\bfE_1$ and $\bfE_2$;}
\item[(II)]
{\it
There exists a Banach space $\bfF$ containing both $\bfF_1$ and $\bfF_2$
as linear subsets with embeddings which are continuous.}
\end{itemize}
See Example~\ref{example-1d}
and Theorem~\ref{theorem-a} below.
\end{remark}


Perhaps the simplest example of LAP
is covered by S.\,Agmon in \cite[Lemma A.1]{agmon1975spectral}:
by that lemma,
the operator $(\p_x-z I)^{-1}$, $z\in\C$, $\Re z\ne 0$,
is uniformly bounded as an operator from $L^2_s(\R)$ to $L^2_{-s}(\R)$, $s>1/2$,
and has a limit (in the uniform operator topology)
as $\Re z\to \pm 0$. For example, for $\Re z<0$,
the solution to the equation $(\p_x-z)u=f$ is given by the operator
$f\mapsto u(x)=\int_{-\infty}^x e^{z(x-y)}f(y)\,dy$,
which is bounded from $L^1(\R)$ to $L^\infty(\R)$
and hence from $L^2_s(\R)$ to $L^2_{-s}(\R)$, $s>1/2$,
uniformly in $z\in\C$, $\Re z<0$.
Here we use the standard notation
\begin{eqnarray}\label{def-lp}
L^p_s(\R^d)=\{
u\in L^p_{\mathrm{loc}}(\R^d);
\,
\langle \cdot\rangle^s u\in L^p(\R^d),
\,
\norm{u}_{L^p_s}
=\norm{\langle\cdot\rangle^s u}_{L^p}
\},
\end{eqnarray}
for any $p\in [1,+\infty]$,
$s\in\R$, $d\in\N$,
with $\langle x\rangle=(1+x^2)^{1/2}$.
Agmon then shows that the LAP is available
for the Laplacian
when the spectral parameter approaches the bulk of the
essential spectrum:
by \cite[Theorem 4.1]{agmon1975spectral},
for $s,\,s'>1/2$,
the resolvent
\begin{eqnarray}\label{r0-ls}
R_0^{(d)}(z)=
(-\Delta-z I)^{-1}:\;L^2_{s}(\R^d)\to L^2_{-s'}(\R^d),
\quad
z\in\C\setminus\overline{\R_{+}},
\quad
d\ge 1,
\quad
\end{eqnarray}
is bounded uniformly for $z\in\varOmega\setminus\overline{\R_{+}}$,
for any open neighborhood $\varOmega\subset\C$
such that
$\overline\varOmega\not\ni \{0\}$,
and has limits as $z\to z_0\pm \jj 0$, $z_0>0$.
For the sharp version
(the $\bfB\to\bfB^*$ continuity of the resolvent
in the Agmon--H\"{o}rmander spaces), see
\cite[Proposition 6.3.6]{MR2598115}.

While the mapping \eqref{r0-ls}
has a limit as $z\to z_0\pm \jj 0$ with $z_0>0$, for any $d\ge 1$,
the behaviour at $z_0=0$ depends on $d$.
For example, in three dimensions,
as long as $s,\,s'>1/2$ and $s+s'\ge 2$,
the mapping \eqref{r0-ls},
represented by the convolution with
$(4\pi\abs{x})^{-1}e^{-\abs{x}\sqrt{-z}}$,
$\ z\in\C\setminus\overline{\R_{+}}$,
$\ \Re\sqrt{-z}>0$,
remains uniformly bounded
and has a limit as $z\to z_0=0$.
A similar boundedness of the resolvent in an open neighborhood of the threshold
$z_0=0$ persists in higher dimensions,
but breaks down in dimensions $d\le 2$.
In particular, for $d=1$,
the resolvent is represented by the convolution with
$e^{-\abs{x}\sqrt{-z}}/(2\sqrt{-z})$,
$z\in\C\setminus\overline{\R_{+}}$,
$\Re\sqrt{-z}>0$,
and cannot have a limit as $z\to 0$
as a mapping $\bfE\to\bfF$
as long as $\bfE,\,\bfF$ are weighted Lebesgue spaces
(at the same time, see Example~\ref{example-1d} below).
There is a similar situation in two dimensions.
We say that the threshold
$z_0=0$ is a \emph{regular point} of the essential spectrum
for $d\ge 3$ and that it is a \emph{virtual level}
if $d\le 2$.

\begin{example}
\label{example-1d}
While
the limit of the resolvent
$(-\p_x^2-z I)^{-1}$, $z\to z_0=0$,
does not exist
in the weak operator topology
of mappings $L^2_s(\R)\to L^2_{-s}(\R)$ with
arbitrarily large $s>1$,
this limit exists
in the weak operator topology of mappings
$\bfE\to\bfF$
if one takes
\[
\bfE=\big\{u\in L^2_s(\R)\sothat
\mbox{$\hat u(\xi)$ vanishes of order $\tau>1$ at $\xi=0$}
\big\},
\ \ \bfF=L^2_{-s}(\R),
\ \ s>1,
\]
with $\norm{u}_\bfE:=\norm{u}_{L^2_s}
+\mathop{\lim\sup}_{\xi\to 0}\abs{\xi}^{-\tau}
\abs{\hat u(\xi)}$. 
Both $L^2_s(\R)$ and $\bfE$ 
are densely and continuously embedded into $\bfX=L^2(\R)$,
while $\bfE\cap L^2_s(\R)=\bfE$ is not dense in $L^2_s(\R)$
(cf. Remark~\ref{remark-roma}):
for a fixed $v\in L^2_s(\R^d)$ with $\hat v(0)\ne 0$
and for any $u\in\bfE$,
one has
\[
\norm{u-v}_{L^2_s}
=\norm{\hat u-\hat v}_{H^s(\R^d)}
\ge c_s\abs{\hat u(0)-\hat v(0)}
=c_s\abs{\hat v(0)},
\]
where $c_s>0$ depends only on $s>d/2$;
thus the left-hand side
cannot approach zero.
\end{example}


\section{Virtual levels}
\label{sect-virtual}

\noindent
{\bf History of virtual levels.\ }
Virtual levels appeared first in the nuclear physics,
in the study of neutron scattering on protons by
E.\,Wigner
\cite{wigner1933streuung}.
While a proton and a neutron with parallel spins
form a spin-one deuteron
(Deuterium's nucleus),
which is stable,
with the binding energy around $2.2$\,MeV,
when the spins
of the particles
are antiparallel,
their binding energy
is near zero.
It was not clear
for some time
whether the corresponding
spin-zero state
is \emph{real} or \emph{virtual},
that is, whether the binding energy was positive or negative;
see, for instance, \cite{fermi1935recombination},
where the word ``virtual'' appears first.
It turned out that this state was \emph{virtual} indeed \cite{PhysRev.50.899},
with a small \emph{negative} binding energy, around $-67$\,KeV.
The resulting increase in the total cross-section
of the neutron scattering on protons
is interpreted as a resonance
of the incoming wave with this ``virtual state''
corresponding to the energy $E\approx 0$.

Mathematically, virtual levels
correspond to particular singularities
of the resolvent at the essential spectrum.
This idea goes back to 
J.\,Schwinger \cite{schwinger1960field}
and was further addressed by
M.\,Birman \cite{birman1961spectrum},
L.\,Faddeev \cite{MR0163695},
B.\,Simon \cite{MR0353896,simon1976bound},
B.\,Vainberg \cite{vainberg1968analytical,vainberg1975short},
D.\,Yafaev \cite{yafaev1974theory,yafaev1975virtual},
J.\,Rauch \cite{MR0495958},
and A.\,Jensen and T.\,Kato \cite{MR544248},
with the focus on
Schr\"odinger operators
in three
dimensions.
Higher dimensions were considered
in \cite{MR563367,yafaev1983scattering,MR748579}.
An approach to more general
symmetric differential operators was developed in
\cite{weidl1999remarks}.
The virtual levels of nonselfadjoint
Schr\"o\-din\-ger operators in three dimensions
appeared in \cite{cuccagna2005bifurcations}.
Dimensions $d\le 2$
require special attention since the free Laplace operator
has a virtual level at zero (see \cite{simon1976bound}).
The one-dimensional case is covered in \cite{MR768299,MR877834}.
The approach from the latter article
was further developed
in \cite{MR952661}
to two dimensions
(if $\int\sb{\R^2} V(x)\,dx\ne 0$)
and then
in \cite{MR1841744} (with this condition dropped)
who give a general approach
in all dimensions,
with the regularity of the resolvent
formulated
via the weights which are square roots of the potential
(and consequently not optimal).
There is an interest in the subject due to
dependence of dispersive estimates on the presence
of virtual levels at the threshold point,
see e.g. \cite{MR544248,yafaev1983scattering,erdogan2004dispersive,yajima2005dispersive}
in the context of Schr\"{o}dinger operators;
the Dirac operators are treated in
\cite{boussaid2006stable,boussaid2008asymptotic,erdogan2017dirac,erdogan2019dispersive}.
Let us mention the dichotomy between a virtual level
and an eigenvalue
manifested in the large-time behavior
of the heat kernel and the behavior of the Green function
near criticality; see \cite{pinchover1992large,pinchover2004large}.
We also mention recent articles
\cite{barth2020efimov}
on properties of virtual states
of selfadjoint Schr\"odinger operators
and
\cite{gesztesy2020absence}
proving the absence
of \emph{genuine} (non-$L^2$) virtual states
of selfadjoint Schr\"odinger operators
and massive and massless Dirac operators,
as well as giving classification of virtual levels and
deriving properties of eigenstates and virtual states.

\smallskip
\noindent
{\bf Equivalent characterizations of virtual levels.\ }
The definition of virtual levels has been somewhat empirical;
one would say that there were a virtual level at the
threshold of the Schr\"odinger operator
if a certain arbitrarily small perturbation could produce a (negative)
eigenvalue.
To develop a general approach for nonselfadjoint operators,
we notice
that the following properties of the threshold $z_0=0$
of the Schr\"o\-din\-ger operator $H=-\Delta+V(x)$,
$x\in\R^d$, $d\ge 1$,
$V\in C_{\mathrm{comp}}(\R^d,\C)$,
are related:

\smallskip

\noindent
{\it
(P1)\;
There is a nonzero solution to
$H\psi=z_0\psi$
from $L^2$
or a certain larger space;

\noindent
(P2)\;
$R(z)=(H-z I)^{-1}$
has no limit
in weighted spaces as $z\to z_0$;

\noindent
(P3)\;
Under an arbitrarily small perturbation,
an eigenvalue can bifurcate from $z_0$.
}
\smallskip

\noindent
For example, properties~{\it (P1) -- (P3)}
are satisfied for $H=-\p_x^2$
in $L^2(\R)$
considered with domain $\dom(H)=H^2(\R)$.
Indeed, the equation $-\p_x^2\psi=0$
has a bounded solution
$\psi(x)=1$;
while non-$L^2$,
it is ``not as bad as a generic solution''
to $(-\p_x^2+V(x))\psi=0$ with $V\in C_{\mathrm{comp}}(\R)$,
which
may grow linearly at infinity.
The integral kernel of the resolvent
$R_0^{(1)}(z)=(-\p_x^2-z I)^{-1}$, $z\in\C\setminus\overline{\R_{+}}$,
contains a singularity at $z=0$:
\begin{eqnarray}\label{free-resolvent-1d}
R_0^{(1)}(x,y;z)=
\frac{e^{-\abs{x-y}\sqrt{-z}}}{2\sqrt{-z}},
\quad
x,\,y\in\R,
\quad
z\in\C\setminus\overline{\R_{+}},
\quad
\Re\sqrt{-z}>0,
\end{eqnarray}
and has no limit as $z\to 0$
even in weighted spaces.
Under a small perturbation,
an eigenvalue may bifurcate from the
threshold (see e.g. \cite{simon1976bound}).
Indeed, for the perturbed operator
$H_\vargamma=-\p_x^2-\vargamma\unity_{[-1,1]}$, $0<\vargamma\ll 1$,
there is a relation
\[
(-\p_x^2-\vargamma\unity_{[-1,1]})\psi(x)=-\kappa^2\psi(x),
\quad
\psi(x)=
\begin{cases}
c_1 e^{-\kappa\abs{x}},&\abs{x}>1,
\\
c_2\cos\big(\,x\sqrt{\vargamma-\kappa^2}\,\big),&\abs{x}\le 1,
\end{cases}
\]
where we assume that $\kappa\in(0,\vargamma^{1/2})$.
The eigenvalue $E_\vargamma:=-\kappa^2$
is obtained from the continuity of $-\p_x\psi/\psi$ at $x=1\pm 0$:
\[
\kappa
=\sqrt{\vargamma-\kappa^2}
\tan\sqrt{\vargamma-\kappa^2}
=\vargamma-\kappa^2+O((\vargamma-\kappa^2)^2),
\]
hence
$\kappa=\vargamma+O(\vargamma^2)$,
leading to
$E_\vargamma=-\kappa^2=-\vargamma^2+O(\vargamma^3)$.
In this case,
when properties~{\it (P1) -- (P3)}
are satisfied,
one says that $z_0=0$
is a \emph{virtual level};
the corresponding
nontrivial bounded solution $\psi(x)=1$
of $-\p_x^2\psi=0$
is a \emph{virtual state}.

On the contrary,
properties~{\it (P1) -- (P3)}
are not satisfied for $H=-\Delta$
in $L^2(\R^3)$,
with $\dom(H)=H^2(\R^3)$.
Regarding~{\it (P1)},
we notice that
nonzero solutions to $(-\Delta+V)\psi=0$
(with certain compactly supported potentials)
can behave like the Green function,
$\sim \abs{x}^{-1}$ as $\abs{x}\to\infty$,
and one expects that this is what virtual states should look like,
while nonzero solutions to $\Delta\psi=0$
cannot have uniform decay as $\abs{x}\to\infty$,
so should not qualify as virtual states;
the integral kernel of
$R_0^{(3)}(z)=(-\Delta -z I)^{-1}$,
\begin{eqnarray}\label{free-resolvent-3d}
R_0^{(3)}(x,y;z)=
\frac{e^{-\sqrt{-z}\abs{x-y}}}{4\pi\abs{x-y}},
\quad
x,\,y\in\R^3,
\quad
z\in\C\setminus\overline{\R_{+}},\quad\Re\sqrt{-z}>0,
\quad
\end{eqnarray}
remains pointwise bounded as $z\to 0$
and has a limit in the space of mappings
$L^2_s(\R^3)\to L^2_{-s'}(\R^3)$, $s,\,s'>1/2$, $s+s'>2$
(see e.g. \cite{MR544248}),
failing~{\it (P2)};
finally, small perturbations
cannot produce negative eigenvalues
(this follows from the Hardy inequality),
so~{\it (P3)} also fails.
In this case, we say that
$z_0=0$ is a \emph{regular} point of the essential spectrum.

We claim that the properties
{\it (P1) -- (P3)} are essentially equivalent,
even in the context of the general theory \cite{virtual-levels}.
These properties are satisfied
when $z_0$ is either an eigenvalue of $H$
or, more generally, a \emph{virtual level}.
To motivate the general theory,
we can start from the Laplace operator in one dimension,
considering the problem
\begin{eqnarray}\label{p-u-f}
(-\p_x^2-z)u(x)=f(x),
\qquad
u(x)\in\C,
\quad
x\in\R.
\end{eqnarray}
For any $f\in C_{\mathrm{comp}}(\R)$,
there is a $C^2$-solution to \eqref{p-u-f}.
If we consider $z\in\C\setminus\overline{\R_{+}}$,
then the natural choice of a solution is
\[
u(x)=(R_0^{(1)}(z)f)(x):=\int_{\R} R_0^{(1)}(x,y;z)f(y)\,dy,
\]
where
the resolvent $R_0^{(1)}(z)=(-\Delta-z I)^{-1}$ has the integral kernel
$R_0^{(1)}(x,y;z)$ from \eqref{free-resolvent-1d}.
This integral kernel is built of
solutions $e^{\pm x\sqrt{-z}}$;
the choice of such a combination
is dictated by the desire to avoid solutions exponentially
growing at infinity.
For $z\ne 0$,
since $R_0^{(1)}(x,y;z)$ is bounded,
the mapping $f\mapsto R_0^{(1)} f$
defines a bounded mapping
$L^1(\R)\to L^\infty(\R)$.
This breaks down at $z=0$,
since
$e^{\pm x\sqrt{-z}}$ are linearly dependent when $z=0$.
To solve \eqref{p-u-f} at $z=0$, one can use
the convolution with the fundamental solution
$
G(x)=\abs{x}/2+x C$, with any $C\in\C$.
While such fundamental solutions
provide a solution
$u=G\ast f$
to \eqref{p-u-f},
this solution may no longer be from $L^\infty$;
any of the above choices of $G$
would no longer be bounded as a mapping $L^1\to L^\infty$.
This problem is resolved if
a potential $V\in C_{\mathrm{comp}}(\R,\C)$
is introduced into \eqref{p-u-f},
\begin{eqnarray}\label{p-v-u-f}
(-\p_x^2+V-z_0)u=f,
\qquad
x\in\R,
\end{eqnarray}
so that the \emph{Jost solution}
$\theta_{-}(x)$
to $(-\p_x^2+V)u=0$
with $\lim\sb{x\to-\infty}\theta_{-}(x)=1$,
tends to infinity
as $x\to+\infty$ and is linearly independent with
the Jost solution $\theta_{+}(x)$,
$\lim\sb{x\to +\infty}\theta_{+}(x)=1$.
To construct a fundamental solution to \eqref{p-v-u-f}
at $z_0=0$,
we set
\begin{eqnarray}\label{gg}
G(x,y;z_0)
=
\frac{1}{W[\theta_{+},\theta_{-}](y)}
\begin{cases}
\theta_{-}(y)\theta_{+}(x),&x>y,
\\
\theta_{-}(x)\theta_{+}(y),&x<y,
\end{cases}
\end{eqnarray}
with $W[\theta_{+},\theta_{-}](y)
=\theta_{+}(y)\theta_{-}'(y)-\theta_{+}'(y)\theta_{-}(y)$, the Wronskian.
This will work if $\abs{\theta_{-}(x)}$ grows as $x\to+\infty$
(and similarly if $\abs{\theta_{+}(x)}$ grows as $x\to-\infty$);
if, on the other hand, $\theta\sb\pm$ remain bounded, then,
as the matter of fact, these functions are linearly dependent,
their Wronskian is zero, and \eqref{gg} is not defined.
In this construction
the space $L^\infty$ appears twice:
it contains the range of $G(z_0)\at{L^2_s(\R)}$, $s>3/2$,
when $\theta\sb\pm$ are linearly independent
(see \cite{virtual-levels}), and it is the space where
$\theta\sb\pm$ live when they are linearly dependent.
This is not a coincidence: from $-u''=f\in C^\infty_{\mathrm{comp}}(\R)$,
we can write $-u''+V u=f+V u$,
and then $u=(-\p_x^2+V-z_0 I)^{-1}(f+V u)$
is in the range of $(-\p_x^2+V-z_0 I)^{-1}
\big(C\sb{\mathrm{comp}}(\R)\big)
\subset L^\infty(\R)$.

We point out that in the case of 
general exterior elliptic problems
the above dichotomy
-- either boundedness of the truncated resolvent
or existence of a nontrivial solution
to a homogeneous problem with
appropriate radiation conditions
--
was studied by B.\,Vainberg \cite{vainberg1975short}.

\begin{example}
Here is an example of virtual levels
at $z_0=0$
of a Schr\"odinger operator in $\R^3$ from \cite{yafaev1975virtual}.
Let $u$ be a solution to
$-\Delta u+V u=0$ in $\R^3$.
Taking the Fourier transform,
we arrive at
$\hat u(\xi)=-\widehat{V u}(\xi)/\xi^2$.
The right-hand side
is not in $L^2_{\mathrm{loc}}(\R^3)$
if $\widehat{V u}(\xi)$ does not vanish at $\xi=0$;
this situation corresponds to
zero being a virtual level, with the corresponding
\emph{virtual state}
$u(x)\sim \abs{x}^{-1}$, $\abs{x}\gg 1$.
One can see that in the case of the
Schr\"odinger operator in $\R^3$
the space of virtual levels is at most one-dimensional.
A similar approach in two dimensions gives
\[
\hat u(\xi)
=-\frac{\widehat{V u}(\xi)}{\xi^2}
=-\frac{c_0+c_1\xi_1+c_2\xi_2+O(\xi^2)}{\xi^2},
\qquad
\xi\in\mathbb{B}^2_1,
\]
indicating that the space of virtual states
at $z_0=0$
of the Schr\"odinger operator in $\R^2$
could consist of up to one ``$s$-state''
approaching a constant value as $\abs{x}\to\infty$
and up to two ``$p$-states''
behaving like $\sim (c_1 x_1+c_2 x_2)/\abs{x}^2$
for $\abs{x}\gg 1$.
\end{example}

\smallskip
\noindent
{\bf Relation to critical Schr\"odinger operators.\ }
In the context of positive-definite symmetric operators,
a dichotomy similar to 
having or not properties {\it (P1) -- (P3)}
-- namely,
either having a particular Hardy-type inequality
or existence of a null state
--
is obtained
by T.\,Weidl \cite{weidl1999remarks},
at that time a PhD. student of M.\,Birman and E.\,Laptev,
as a generalization of
Birman's approach \cite[\S1.7]{birman1961spectrum}
which was based on closures of the space with respect to
quadratic forms
corresponding to symmetric positive-definite operators
(in the spirit of the Krein--Vishik--Birman
extension theory
\cite{krein1947theory,vishik1952general,birman1956theory}).
This approach is directly related to the research
on subcritical and critical Schr\"odinger operators
\cite{simon1981large,murata1986structure,pinchover1988positive,pinchover1990criticality,gesztesy1991critical,pinchover2006ground,pinchover2007ground,takac2008generalized,devyver2014spectral,lucia2018criticality,lucia2020addendum}.
Let us present the following result from \cite{pinchover2006ground},
which we write in the particular case
of $\Omega=\R^d$ and $V\in C_{\mathrm{comp}}(\R^d,\R)$:

\begin{borrowed}
Let
$H=-\Delta+V$
with $V\in C_{\mathrm{comp}}(\R^d,\R)$
be a Schr\"odinger operator
in $L^2(\R^d)$,
and assume that
the associated quadratic form
\[
\mathbf{a}[u]
:=\int_{\R^d}(\abs{\nabla u}^2+V \abs{u}^2)\,dx
\]
is nonnegative on $C^\infty_{\mathrm{comp}}(\R^d)$.
Then either
there is a continuous function $w(x)>0$
such that
$\int_{\R^d} w |u|^2\,dx\le
\mathbf{a}[u]
$
for any $u\in C^\infty_{\mathrm{comp}}(\R^d)$
(one says that $\mathbf{a}[\cdot]$ has a \emph{weighted spectral gap}),
or
there is a sequence $\varphi_j\in C^\infty_{\mathrm{comp}}(\R^d)$
such that $\mathbf{a}[\varphi_j]\to 0$, $\varphi_j\to \varphi>0$
locally uniformly on $\R^d$
(then one says that $\mathbf{a}[\cdot]$ has a \emph{null state} $\varphi$).
\end{borrowed}


Let us mention that in the former case,
when $\mathbf{a}[\cdot]$ has a weighted spectral gap,
the operator $H$ is \emph{subcritical}
(that is, it admits a positive Green's function),
and that in the latter case,
when $\mathbf{a}[\cdot]$ has a null state,
$H$ is \emph{critical}.
This \emph{null state}
coincides with Agmon's \emph{ground state},
which can be characterized as a state with
minimal growth at infinity
from \cite[Definitions 4.1,\,5.1]{agmon1982positivity}.
See \cite{pinchover1988positive,pinchover1990criticality,pinchover2006ground}
for more details.


A \emph{null state}, or Agmon's \emph{ground state},
corresponds
to a virtual level at the bottom of the spectrum,
in the following sense:

\begin{lemma}
A nonnegative Schr\"odinger operator
$H=-\Delta+V$ in $L^2(\R^d)$,
with $V\in C_{\mathrm{comp}}(\R^d,\R)$,
has a null state $\varphi$ if
any compactly supported
negative perturbation $H-W$ of $H$,
with $W\in C_{\mathrm{comp}}(\R^d)$,
$W\ge 0$, $W\ne 0$,
produces a negative eigenvalue.


For the converse,
we impose a stronger requirement that
$V\in C^m_{\mathrm{comp}}(\R^d,\R)$,
$m\ge\max(0,[n/2]-1)$,
$\supp V\subset K\Subset\R^d$.
If an arbitrary negative perturbation $H-W$ of $H$,
with
$W\in C^m_{\mathrm{comp}}(\R^d,\R)$,
$\supp W\subset K$,
$W\ge 0$, $W\ne 0$, produces a negative eigenvalue,
then $H$ has a null state.
\end{lemma}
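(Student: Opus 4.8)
The plan is to establish the two implications separately, in each case exploiting the variational characterization of criticality via the quadratic form $\mathbf{a}[\cdot]$ and the dichotomy quoted from \cite{pinchover2006ground}: a nonnegative $H=-\Delta+V$ is either \emph{subcritical} (so $\mathbf{a}[\cdot]$ has a weighted spectral gap, $\int_{\R^d} w|u|^2\,dx\le\mathbf{a}[u]$ for some continuous $w>0$) or \emph{critical} (so $\mathbf{a}[\cdot]$ has a null state $\varphi>0$). Thus it suffices to connect each side of the stated equivalence to exactly one horn of this dichotomy.

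For the first implication, I would prove the contrapositive: if $H$ has no null state, then by the dichotomy $H$ is subcritical, so there is a continuous $w>0$ with $\int_{\R^d} w\abs{u}^2\,dx\le\mathbf{a}[u]$ for all $u\in C^\infty_{\mathrm{comp}}(\R^d)$. Now take any $W\in C_{\mathrm{comp}}(\R^d)$ with $W\ge 0$, $W\ne 0$; since $W$ is compactly supported and $w>0$ is continuous, on $\supp W$ we have $w\ge c>0$, hence $\int W\abs{u}^2\,dx\le (\sup W/c)\int_{\supp W} w\abs{u}^2\,dx\le (\sup W/c)\,\mathbf{a}[u]$. Choosing the coupling constant small, i.e. replacing $W$ by $\lambda W$ with $\lambda<c/\sup W$, gives $\mathbf{a}[u]-\int\lambda W\abs{u}^2\,dx\ge (1-\lambda\sup W/c)\,\mathbf{a}[u]\ge 0$, so $H-\lambda W\ge 0$ and no negative eigenvalue is produced. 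This contradicts the hypothesis that \emph{any} such perturbation produces a negative eigenvalue (the hypothesis quantifies over all $W\ge 0$, $W\ne 0$, including small ones); therefore $H$ must have a null state. The only subtlety is the quantifier reading — one must use that the hypothesis applies to arbitrarily small $W$ — which is exactly where subcriticality bites.

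For the converse I would argue directly. Assume $H$ is \emph{not} critical; by the dichotomy it is subcritical, with a weighted spectral gap $w>0$. The goal is to produce a \emph{single} compactly supported $W\ge 0$, $W\ne 0$ with $\supp W\subset K$, such that $H-W$ has \emph{no} negative eigenvalue, i.e. $H-W\ge 0$ on $C^\infty_{\mathrm{comp}}(\R^d)$; this contradicts the hypothesis and forces $H$ to be critical, hence to have a null state. As above, choosing $W=\lambda w\,\unity_K$ (truncated so $W$ is continuous, or mollified to be smooth — here one uses $m\ge\max(0,[n/2]-1)$ only to keep $W$ of the required regularity class, which is a routine smoothing matter) with $\lambda$ small enough that $\lambda\sup_K w<\inf_K w$ gives $\int W\abs{u}^2\,dx\le \lambda\int_K w\abs{u}^2\,dx\le\lambda\,\mathbf{a}[u]<\mathbf{a}[u]$, so $H-W\ge 0$. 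The regularity hypothesis on $V$ and the condition $\supp W\subset K=\supp V$ ensure we stay within the stated perturbation class; no positivity/criticality theory beyond the quoted dichotomy is needed here.

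The main obstacle I anticipate is not the estimates — those are elementary once the weighted spectral gap is in hand — but the correct handling of the logical structure and the regularity bookkeeping: in the first part one must be careful that ``produces a negative eigenvalue'' is being asserted for \emph{all} admissible $W$ (so that testing against a tiny $W$ already yields the contradiction), and in the converse one must exhibit $W$ with prescribed support $\subset K$ and prescribed smoothness $C^m$, so the truncation/mollification of the weight $w$ must be done compatibly with $m\ge\max(0,[n/2]-1)$. A secondary point worth checking is that the quoted result from \cite{pinchover2006ground} is stated for $V\in C_{\mathrm{comp}}(\R^d,\R)$ with $\mathbf{a}[\cdot]\ge 0$, which is exactly our standing hypothesis (``nonnegative Schr\"odinger operator''), so the dichotomy applies verbatim and the null state it produces is Agmon's ground state.
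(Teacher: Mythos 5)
There is a real gap, and it stems from a misreading of the Lemma's structure that the student should have caught from the internal tension in their own writeup. The Lemma (as the paper's proof makes unambiguous, given that its second paragraph is headed ``Let us prove the converse statement'') consists of two converse implications:
\begin{itemize}
\item[(A)] if $H$ has a null state, then every $W\in C_{\mathrm{comp}}(\R^d)$ with $W\ge 0$, $W\ne 0$ makes $H-W$ acquire a negative eigenvalue;
\item[(B)] if (under the stronger regularity/support hypotheses on $V$ and $W$) every such $W$ makes $H-W$ acquire a negative eigenvalue, then $H$ has a null state.
\end{itemize}
You have read the first sentence's ``if'' literally, and as a result your ``first implication'' and your ``converse'' are both direction~(B), proved by essentially the same dichotomy-and-scaling argument. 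The fact that your ``converse'' is not the converse of your ``first implication'' should have been a warning sign. Direction~(A) — which the paper establishes by taking the null-state sequence $\varphi_j$ with $\mathbf{a}[\varphi_j]\to 0$ and $\varphi_j\to\varphi$ locally uniformly, and showing that $\langle\varphi_j,(H-W)\varphi_j\rangle\to-\langle\varphi,W\varphi\rangle<0$ so that the Rayleigh quotient of $H-W$ becomes negative — is never addressed in your proposal. Nothing in the Pinchover--Tintarev dichotomy gives you~(A) for free: knowing that $H$ is critical does not, by itself, tell you that $H-W$ has a negative eigenvalue for every admissible $W$; you must actually test the quadratic form against the null-state sequence.

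On the other hand, your treatment of direction~(B) is a genuinely different and in some ways cleaner route than the paper's. The paper proves~(B) constructively: it takes a shrinking sequence $W_j\to 0$ in $C^m$, extracts a locally uniformly convergent subsequence of normalized eigenfunctions $\psi_j$ of $H-W_j$ via elliptic regularity and Ascoli--Arzel\`a, invokes Harnack's inequality to get positivity of the limit $\varphi$, and checks $\mathbf{a}[\psi_j]\to 0$ so that $\varphi$ is a null state; the hypothesis $V\in C^m_{\mathrm{comp}}$ with $m\ge\max(0,[n/2]-1)$ is what makes the compactness argument go through. Your argument instead invokes the subcritical/critical dichotomy directly: if there is no null state then $\mathbf{a}[\cdot]$ has a weighted spectral gap $w>0$, and truncating/mollifying a small multiple of $w$ inside $K$ produces an admissible $W$ with $H-W\ge 0$, contradicting the hypothesis. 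This is shorter, avoids the elliptic-regularity bookkeeping, and (interestingly) does not actually use the extra $C^m$ smoothness of $V$. The trade-off is that the paper's route exhibits the null state explicitly as a limit of eigenfunctions, whereas yours is a pure contradiction argument leaning on the quoted dichotomy. So: good alternative for~(B), but you still owe a proof of~(A).
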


\begin{proof}
Let $\varphi>0$ be a null state of $H$ and let
$\varphi_j$ be a sequence such that
$\varphi_j\to \varphi$ locally uniformly on $\R^d$
and such that $\mathbf{a}[\varphi_j]\to 0$
as $j\to\infty$.
Let $W\in C_{\mathrm{comp}}(\R^d)$, $W\ge 0$, $W\not\equiv 0$.
Then
\[
\lim_{j\to\infty}
\langle\varphi_j,(H-W)\varphi_j\rangle
=
\lim_{j\to\infty}
\big(
\mathbf{a}[\varphi_j]
-
\langle\varphi_j,W\varphi_j\rangle
\big)
=
-\langle\varphi,W\varphi\rangle<0
\]
(we took into account the convergence $\varphi_j\to\varphi$,
locally uniformly on $\R^d$),
hence $\langle\varphi_j,(H-W)\varphi_j\rangle<0$
for some $j\in\N$,
and so the Rayleigh quotient for $H-W$ is
strictly negative,
leading to $\sigma(H-W)\cap\R_{-}\ne\emptyset$.

Let us prove the converse statement.
Let $\supp V\subset K\Subset\R^d$
and
let there be
perturbations $W_j\in C^m_{\mathrm{comp}}(\R^d,\R)$,
$j\in\N$,
with $\supp W_j\subset K$,
$W_j\ge 0$, $W_j\ne 0$
for all $j$,
and with $\sup_{x\in\R^d}\abs{\p_x^\beta W_j(x)}\to 0$
as $j\to\infty$
for all multiindices $\beta\in\N_0^d$ with
$\abs{\beta}\le m$.
By the assumption of the Lemma,
$\lambda_j:=\inf\sigma(H+W_j)<0$
(thus $\lambda_j\to 0-$ as $j\to\infty$).
Let $\psi_j\in L^2(\R^d,\C)$ be the corresponding eigenfunctions,
which can be shown to be from
$H^{m+2}(\R^d)\subset C^{\alpha}(\R^d)$,
$\forall\alpha<1/2$
(having the uniform bound in $H^{m+2}(\mathbb{B}^d_R,\C)$
for each $R\ge 1$).
By \cite[Theorem 8.38]{MR737190},
we can assume that $\psi_j$
are strictly positive.
Without loss of generality, we assume that
$\sup\sb{x\in\R^d}\psi(x)=1$.
By the maximum principle,
the functions
$\psi_j$ reach these maxima at some point
$x_j\in K$.
We may pass to a subsequence
so that
$x_j\to x_0\in K$ as $j\to\infty$.
Then, by the Ascoli--Arzel\`{a} theorem,
we may pass to a subsequence
so that the functions $\psi_j$ converge,
uniformly on compacts.
The limit function $\varphi\in C(\R^d,\C)$
is nonnegative
and nonzero
(since
$\varphi(x_0)=\lim\sb{j\to\infty}\varphi_j(x_j)=1$),
and satisfies $H\varphi=0$ (in the sense of distributions).
Since
\[
\mathbf{a}[\psi_j]+\langle\psi_j,W_j\psi_j\rangle
=\langle\psi_j,(H+W_j)\psi_j\rangle=\lambda_j\langle\psi_j,\psi_j\rangle<0
\qquad
\forall j\in\N,
\]
where $\langle\psi_j,W_j\psi_j\rangle\to 0$
(due to the convergence $\psi_j\to\varphi$,
$\supp W_j\subset K$,
and due to $\norm{W_j}_{L^\infty}\to 0$ as $j\to\infty$)
while $\mathbf{a}[\psi_j]\ge 0$,
one can see that
$\mathbf{a}[\psi_j]\to 0$.
Moreover, due to Harnack's inequality for
Schr\"odinger operators
\cite[Theorem 2.5]{chiarenza1986harnack},
since $\varphi(x_0)=1$,
one has $\varphi(x)\ne 0$ for all $x\in\R^d$.
(In \cite{chiarenza1986harnack}, the proof is given for $d\ge 3$
but is shown to apply to $d=2$ as well; the statement for $d=1$
is trivial by the ODE uniqueness theory.)
Thus the limit function $\varphi$ is a \emph{null state}.
%
\end{proof}

\section{General theory of virtual levels
in Banach spaces}
\label{sect-general}

We now sketch our approach to virtual levels
from \cite{virtual-levels}.
Let $\bfX$ be an infinite-dimensional complex Banach space and
let $A\in\scrC(\bfX)$
be a closed operator with dense domain $\dom(A)\subset\bfX$.
We assume that there are some complex Banach spaces $\bfE$, $\bfF$
with embeddings
$\bfE\hookrightarrow\bfX\hookrightarrow\bfF$.
We will assume that the operator $A$
and the ``regularizing'' spaces $\bfE$ and $\bfF$
satisfy the following assumption.

\begin{assumption}
\label{ass-virtual}
\begin{enumerate}
\item
\label{ass-virtual-1}
The embeddings
\[
\bfE\mathop{\longhookrightarrow}\limits\sp{\imath}
\bfX\mathop{\longhookrightarrow}\limits\sp{\jmath}\bfF
\]
are dense and continuous.
\item
\label{ass-virtual-2}
The operator $A:\,\bfX\to\bfX$, considered as a mapping
$\bfF\to\bfF$,
\[
A\sb{\bfF\shortto\bfF}:\,\bfF\to\bfF,
\quad
\dom(A\sb{\bfF\shortto\bfF})=\jmath(\dom(A)),
\quad
A\sb{\bfF\shortto\bfF}:\,y\mapsto \jmath(A x)
\;\; \mbox{if}\;\; y=\jmath(x),
\]
is closable in $\bfF$, with closure $\hatA\in\scrC(\bfF)$
and domain
$\dom(\hatA)\supset\jmath\big(\dom(A)\big)$.
\item
\label{ass-virtual-3}
Denote
\[
 \dom(A_{\bfE\shortto\bfE})
=\{\phi\in\bfE\sothat\,\imath(\phi)\in\dom(A),\,A\imath(\phi)\in\imath(\bfE)\}
\]
and 
\[
 \dom(\hatA_{\bfE\shortto\bfE})
=\{\phi\in\bfE\sothat\,\jmath\circ\imath(\phi)\in\dom(\hatA),\,\hatA\jmath\circ\imath(\phi)\in \jmath\circ\imath(\bfE)\}.
\]
The space $\jmath\circ\imath(\dom(A_{\bfE\shortto\bfE}))$ is dense in
$\jmath\circ\imath(\dom(\hatA_{\bfE\shortto\bfE}))$
in the topology induced by the graph norm of $\hatA$,
defined by
\[
\norm{\psi}_{\hatA}
=
\norm{\psi}_{\bfF}
+
\norm{\hatA\psi}_{\bfF},
\qquad
\psi\in\dom(\hatA)\subset\bfF.
\]

\end{enumerate}
\end{assumption}

We note that Assumption~\ref{ass-virtual}
is readily satisfied in the usual examples
of differential operators.
For convenience,
from now on,
we will assume that
$\bfE\subset\bfX\subset\bfF$ (as vector spaces)
and will omit
$\imath$ and $\jmath$
in numerous relations.


\begin{definition}[Virtual levels]
\label{def-virtual}
Let $A\in\scrC(\bfX)$ and $\bfE\hookrightarrow\bfX\hookrightarrow\bfF$
satisfy Assumption~\ref{ass-virtual}.
Let
\[
\varOmega\subset\C\setminus\sigma(A)
\]
be a connected open set
such that $\sigma\sb{\mathrm{ess}}(A)\cap\p\varOmega$ is nonempty.
We say that a point
$z_0\in\sigma\sb{\mathrm{ess}}(A)\cap\p\varOmega$
is a
\emph{point of the essential spectrum of $A$
of rank $r\in\N_0$
relative to $(\varOmega,\bfE,\bfF)$
}
if it is the smallest value
for which there is
$B\in\scrB_{00}(\bfF,\bfE)$
(with $\scrB_{00}$ denoting bounded operators of finite rank)
of rank $r$
such that
\begin{eqnarray}
\varOmega\cap\sigma(A+B)\cap\mathbb{D}_\delta(z_0)=\emptyset
\end{eqnarray}
for some $\delta>0$,
and there exists the following limit
in the weak operator topology
of mappings $\bfE\to\bfF$:
\begin{eqnarray}\label{lim}
(A+B-z_0 I)_{\varOmega,\bfE,\bfF}^{-1}:=
\wlim\sb{z\to z_0,\,z\in\varOmega}(A+B-z I)^{-1}
:\;\bfE\to\bfF.
\end{eqnarray}
Points of rank $r=0$
relative to $(\varOmega,\bfE,\bfF)$
(so that there is a limit \eqref{lim} with $B=0$)
are called
\emph{regular points of the essential spectrum
relative to $(\varOmega,\bfE,\bfF)$}.

If $z_0$
is of rank $r\ge 1$
relative to $(\varOmega,\bfE,\bfF)$,
we call it
an \emph{exceptional point of rank $r$ relative to $(\varOmega,\bfE,\bfF)$},
or a \emph{virtual level of rank $r$ relative to $(\varOmega,\bfE,\bfF)$}.
The corresponding \emph{virtual states}
are defined as elements of the space
\[
\frakM_{\varOmega,\bfE,\bfF}(A-z_0 I)
:=\big\{
\Psi\in\range\big((A+B-z_0 I)^{-1}_{\varOmega,\bfE,\bfF}\big)
\sothat
(A\sb{\bfF\shortto\bfF}-z_0 I)\Psi=0
\big\},
\]
with any $B\in\scrB_{00}(\bfF,\bfE)$
such that
the limit \eqref{lim}
is defined
(this space is of dimension $r$ and does not depend on the choice of
$B$; see Theorem~\ref{theorem-m} below).
\end{definition}

Above,
$\sigma\sb{\mathrm{ess}}(A)$
is F.\, Browder's essential spectrum
\cite[Definition 11]{browder1961spectral}.
It can be characterized as
$\sigma(A)\setminus\sigma\sb{\mathrm{d}}(A)$,
with the discrete spectrum $\sigma\sb{\mathrm{d}}(A)$
being the set of isolated points of $\sigma(A)$
with corresponding Riesz projectors having finite rank
(see e.g. \cite[Lemma~III.125]{opus}).
Let us emphasize that the existence of the limit \eqref{lim}
implicitly implies that there is $\delta>0$ such that
$\varOmega\cap\sigma(A+B)\cap\mathbb{D}_\delta(z_0)=\emptyset$.

\begin{remark}
Definition~\ref{def-virtual}
allows one to treat
generalized eigenfunctions
corresponding to ``threshold resonances''
of a Schr\"odinger operator
$A$
(not necessarily selfadjoint)
and solutions to $(A-z_0 I)u=0$
with $z_0$ from the bulk of
$\sigma\sb{\mathrm{ess}}(A)$
which satisfy the \emph{Sommerfeld radiation condition}
as the same concept
of \emph{virtual states}
$\Psi\in\frakM_{\varOmega,\bfE,\bfF}(A-z_0 I)$
(with appropriate choice of $\varOmega$).
\end{remark}

\begin{remark}
In case when $z_0$ is a virtual level
but not an eigenvalue,
it seems reasonable to call it an \emph{(embedded) resonance}.
Note that the name \emph{threshold resonance}
seems misleading,
since in the nonselfadjoint case
a virtual level could be located
at any point of contact of the essential spectrum
with the resolvent set,
not necessarily at a \emph{threshold}.
(According to \cite{howland1974poiseux},
thresholds could be defined as
(i) a branch point of an appropriate function,
(ii) a point where the absolutely continuous part changes multiplicity,
or (sometimes) (iii) an end point of the spectrum.)
\end{remark}

\begin{remark}
\label{remark-null}
The dimension of the null space of a square matrix
$M
$
can be similarly characterized
as
$\dim\ker(M)=\min
\big\{
\rank N\sothat\det(M+N)\ne 0
\big\}.
$
For example,
for
$M=
\footnotesize
\tiny
\begin{bmatrix}0&1&0\\0&0&1\\0&0&0\end{bmatrix}
$,
we can take
$N=
\footnotesize
\tiny
\begin{bmatrix}0&0&0\\0&0&0\\1&0&0\end{bmatrix}
$,
in agreement with $\dim\ker(M)=1$.
\end{remark}

\begin{example}
Let $A=-\Delta$ in $L^2(\R^3)$, $\dom(A)=H^2(\R^3)$.
By \cite[Appendix A]{agmon1975spectral},
for any $s,\,s'>1/2$
and $z_0>0$,
the resolvent $(-\Delta-z I)^{-1}$ converges
as $z\to z_0\pm 0\jj$
in the uniform operator topology of continuous mappings
$L^2_s(\R^3)\to L^2_{-s'}(\R^3)$.
The two limits differ;
the integral kernels of the limiting operators
$(-\Delta-z_0 I)_{\C_\pm}^{-1}$
are given by
$e^{\pm\jj\abs{x-y}\sqrt{z_0}}/(4\pi\abs{x-y})$,
$x,\,y\in\R^3$.
It follows that $z_0>0$ is a regular point
of the essential spectrum of $-\Delta$
relative to $\varOmega=\C_{\pm}$.
Moreover,
according to \cite{MR544248},
there is a limit of the resolvent
as $z\to z_0=0$, $z\in\C\setminus\overline{\R_{+}}$,
in the uniform operator topology of continuous mappings
$L^2_s(\R^3)\to L^2_{-s'}(\R^3)$,
$s,\,s'>1/2$, $s+s'>2$,
hence $z_0=0$ is also a regular point of the essential spectrum
(relative to $\varOmega=\C\setminus\overline{\R_{+}}$).
\end{example}


\begin{example}
Consider the differential operator
$A=-\jj\p_x +V:\,L^2(\R)\to L^2(\R)$,
$\dom(A)=H^1(\R)$,
with $V$ the operator of multiplication by $V\in L^1(\R)$.
The solution to
$(-\jj\p_x+V-z I)u=f\in L^1(\R)$,
$z\in\C_{+}$,
is given by
\[
u(x)=\jj\!\int\limits^x_{-\infty}\!e^{\jj z(x-y)-\jj W(x)+\jj W(y)}f(y)\,dy,
\quad
W(x):=\!\int\limits^x_{-\infty}\! V(y)\,dy,\; W\in L^\infty(\R).
\]
For each $z\in\C_{+}$,
the mapping
$(A-z I)^{-1}:\,f\mapsto u$
is continuous
from $L^1(\R)$ to $L^\infty(\R)$,
with the
bound uniform in $z\in\C_{+}$.
Moreover, one can see that for each $z_0\in\R$ there exists a limit
$(A-z_0 I)^{-1}_{\C_{+},L^1,L^\infty}=
\lim\limits_{z\to z_0,\ z\in\C_+}
(A-z I)^{-1}$
in the strong operator topology of mappings
$L^1\to L^\infty$;
thus, any $z_0\in\R$
is a regular point of the essential spectrum
relative to $(\C_{+},\,L^2_s(\R),\,L^2_{-s'}(\R))$
(and similarly relative to $(\C_{-},\,L^2_s(\R),\,L^2_{-s'}(\R))$).
\end{example}

\begin{example}\label{example-shift}
Consider the left shift
$L:\,\ell^2(\N)\to \ell^2(\N)$,
$(x_1,x_2,x_3,\dots)
\mapsto
(x_2,x_3,x_4,\dots)$,
with
$
\sigma(L)=\sigma\sb{\mathrm{ess}}(L)
=\overline{\mathbb{D}_1}$.
The matrix representations of $L-z I$
and
$(L-z I)^{-1}$,
$\abs{z}>1$,
are given by
\[
L-z I=
{\footnotesize
\tiny
\begin{bmatrix}
-z&1&0&\cdots\\[1pt]
0&-z&1&\cdots\\[1pt]
0&0&-z&\cdots\\[1pt]
\cdots&\cdots&\cdots&\cdots
\end{bmatrix}
},
\ z\in\C;
\quad
(L-z I)^{-1}
=
-
{\footnotesize
\tiny
\begin{bmatrix}
z^{-1}&z^{-2}&z^{-3}&\cdots\\
0&z^{-1}&z^{-2}&\cdots\\
0&0&z^{-1}&\cdots\\
\cdots&\cdots&\cdots&\cdots
\end{bmatrix}
},
\ z\in\C\setminus\overline{\mathbb{D}_1}.
\]
   From the above representation,
one has
$\abs{((L-z I)^{-1}x)_i}\le
\abs{z^{-1}x_i}+\abs{z^{-2}x_{i+1}}+\dots\le\norm{x}_{\ell^1}$,
and moreover $\lim_{i\to\infty}((L-z I)^{-1}x)_i=0$, 
for any $x\in \ell^1(\N)\subset \ell^2(\N)$ and any $z\in\C$, $\abs{z}>1$,
hence
$(L-z I)^{-1}$
defines a continuous linear mapping
$\ell^1(\N)\to c_0(\N)$,
with the norm bounded (by one) uniformly
in $z\in\C$, $\abs{z}>1$.
For any $\abs{z_0}=1$,
the mappings
$(L-z I)^{-1}:\,\ell^1(\N)\to c_0(\N)$
have a limit
as $z\to z_0$, $\abs{z}>1$,
in the weak operator topology
(also in the strong operator topology).
It follows that any
of the boundary points of the spectrum
of $L$
(i.e., any $z_0\in\C$ with $\abs{z_0}=1$)
is a regular point of the essential spectrum
relative to
$\big(\C\setminus\overline{\mathbb{D}_1},\,\ell^1(\N),\,c_0(\N)\big)$.

Let us construct an operator with a virtual level
at $z_0\in\C$, $\abs{z_0}=1$.
Assume that
$K\in\scrB_{00}\big(\ell^\infty(\N),\ell^1(\N)\big)$
has eigenvalue $1\in\sigma\big(K\at{\ell^1}\big)$,
with the corresponding eigenfunction $\phi\in \ell^1(\N)$.
Then the operator
$A=L-K(L-z_0 I)$,
$\dom(A)=\ell^2(\N)$,
has a virtual level at $z_0$
since $z_0$ is a regular point of $A+B$,
with $B=K(L-z_0 I):\,c_0(\N)\to \ell^1(\N)$
of finite rank
(we note that $L$ has a bounded extension onto $c_0(\N)$).
The function
$\Psi=(L-z_0 I)^{-1}_{\varOmega,\ell^1,c_0}\phi\in c_0(\N)$
is a virtual state of
$A=L-K(L-z_0 I)$ corresponding to $z_0$,
relative to
$\big(\C\setminus\overline{\mathbb{D}_1},\,\ell^1(\N),\,c_0(\N)\big)$,
satisfying $(\hatA-z_0 I)\Psi=0$,
with $\hatA$ a closed extension of $A$
onto $c_0(\N)$.
\end{example}

\begin{example}\label{example-c}
Let $\bfX$ be an infinite-dimensional Banach space
and let $Z:\,\bfX\to\bfX$, $\psi\mapsto 0$, $\forall \psi\in\bfX$,
be the zero operator with
$\sigma(Z)=\sigma\sb{\mathrm{ess}}(Z)=\{0\}$.
Assume that $\bfE,\,\bfF$ are Banach spaces
with dense continuous embeddings
$\bfE\hookrightarrow\bfX\hookrightarrow\bfF$.
Let $B\in\mathscr{B}_{00}(\bfF,\bfE)$.
Let $\epsilon>0$ be smaller than the absolute value
of the smallest nonzero eigenvalue of $B$
(there are finitely many nonzero eigenvalues
since $B$ is of finite rank),
and define
\[
P_0=
-\frac{1}{2\pi\jj}
\ointctrclockwise_{\abs{\zeta}=\epsilon}
(B-\zeta I)^{-1}\,d\zeta
:\;\bfX\to\bfX
\]
to be a projection onto the eigenspace of $B$
corresponding to eigenvalue $\lambda=0$.
Then,
for $z\in\C\setminus\{0\}$,
\[
(Z+B-z I)^{-1}
P_0
=(B-z I)^{-1}
P_0
=-z^{-1}
P_0:\,\bfX\to\bfX,
\]
hence
\[
(Z+B-z I)^{-1}
P_0
=-z^{-1}
P_0
:\,\bfE\to\bfF,
\quad z\in\C\setminus\{0\},
\]
with the norm unbounded as $z\to 0$, $z\ne 0$.
Thus, $z_0=0$ is an exceptional point
of the essential spectrum of $Z$
of infinite rank
relative to $\C\setminus\{0\}$
and arbitrary $\bfE,\,\bfF$.
\end{example}


\begin{remark}
Let us contrast virtual levels
to \emph{spectral singularities}
\cite{naimark1954investigation,schwartz1960some,pavlov1966nonselfadjoint,lyantse1967differential,guseinov2009concept,konotop2019designing}
(for a more general setting, see \cite{nagy1986operators}).
We note that
selfadjoint operators have no spectral singularities,
although they could have virtual levels at threshold points;
this shows that these two concepts differ.
\end{remark}


\begin{remark}
There is no
direct relation of virtual levels to \emph{pseudospectrum}
\cite{landau1975szego}.
For $A\in\scrC(\bfX)$, one defines
the $\varepsilon$-pseudospectrum
by
\[
\sigma_\varepsilon(A)
=\sigma(A)\cup
\{
z\in\C\setminus\sigma(A)\sothat
\norm{(A-z I)^{-1}}\ge\varepsilon^{-1}
\}.
\]
Since $\sigma\sb\varepsilon\big(-\Delta\at{L^2(\R^d)}\big)=
\{
z\in\C;\,
\dist\big(z,\overline{\R_{+}}\big)\le\varepsilon\}$
does not depend on the dimension $d\ge 1$,
the behaviour of pseudospectrum
near the threshold $z_0=0$
does not distinguish the presence of
a virtual level at $z_0$ for $d\le 2$
and its absence for $d\ge 3$.
\end{remark}

The following key lemma
is essentially an abstract version of \cite[Lemma 2.4]{MR544248}.

\begin{lemma}[Limit of the resolvent
as the left and right inverse]
\label{lemma-jk}
Let $A\in\scrC(\bfX)$ and $\bfE\hookrightarrow\bfX\hookrightarrow\bfF$
satisfy Assumption~\ref{ass-virtual}.
Let $\varOmega\subset\C\setminus\sigma(A)$.
Assume that $z_0\in\sigma\sb{\mathrm{ess}}(A)\cap\p\varOmega$
is a regular point of the essential spectrum
relative to $(\varOmega,\bfE,\bfF)$,
so that there exists a limit
\[
(A-z_0 I)^{-1}_{\varOmega,\bfE,\bfF}
:=
\wlim\sb{z\to z_0,\,z\in\varOmega}
(A-z I)^{-1}:\;\bfE\to\bfF.
\]
This limit
is both the left and the right inverse of
$\hatA-z_0 I:\range\big((A-z_0 I)^{-1}_{\varOmega,\bfE,\bfF}\big)\to \bfE$.
\end{lemma}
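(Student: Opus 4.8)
The plan is to exploit the fact that on the dense subspace $\dom(A_{\bfE\shortto\bfE})$ the operator $A-z_0I$ acts as a genuine invertible map into $\bfE$, and then use Assumption~\ref{ass-virtual}(3) together with closedness of $\hatA$ to pass to the full range. First I would show the right-inverse property: given $g\in\bfE$, pick $u_z:=(A-zI)^{-1}g\in\dom(A)\subset\bfX$ for $z\in\varOmega$ near $z_0$; then $(A-zI)u_z=g$, so $u_z\in\dom(A_{\bfE\shortto\bfE})$, and as $z\to z_0$ in $\varOmega$ we have $u_z\rightharpoonup u:=(A-z_0I)^{-1}_{\varOmega,\bfE,\bfF}g$ weakly in $\bfF$. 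Since $\hatA$ is closed in $\bfF$ and $(A_{\bfF\shortto\bfF}-zI)u_z=g$ with $g$ fixed in $\bfF$ and $u_z\rightharpoonup u$, I would argue — using that weak convergence plus a norm bound from the uniform boundedness of the limiting family lets one stay inside the graph of the (weakly) closed operator $\hatA$ — that $u\in\dom(\hatA)$ and $(\hatA-z_0I)u=g$. This is exactly the statement that $\hatA-z_0I$ restricted to $\range\big((A-z_0I)^{-1}_{\varOmega,\bfE,\bfF}\big)$ is surjective onto $\bfE$ with the limit operator as a right inverse.

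For the left-inverse property I would start from an arbitrary $\Psi\in\range\big((A-z_0I)^{-1}_{\varOmega,\bfE,\bfF}\big)$, say $\Psi=(A-z_0I)^{-1}_{\varOmega,\bfE,\bfF}g$ with $g\in\bfE$, for which $(\hatA-z_0I)\Psi=g$ by the previous paragraph. I must check that $(A-z_0I)^{-1}_{\varOmega,\bfE,\bfF}g=\Psi$, i.e. applying the limit operator to $(\hatA-z_0I)\Psi$ returns $\Psi$. By Assumption~\ref{ass-virtual}(3), $\dom(A_{\bfE\shortto\bfE})$ is dense in $\dom(\hatA_{\bfE\shortto\bfE})$ in the graph norm of $\hatA$; since $\Psi$ lies in the latter domain (its image under $\hatA-z_0I$ is $g\in\bfE$), I can approximate $\Psi$ in graph norm by $\psi_n\in\dom(A_{\bfE\shortto\bfE})$. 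For each such $\psi_n$, $(A-z_0I)^{-1}(A-zI)\psi_n=\psi_n$ for $z\in\varOmega$, and letting $z\to z_0$ (using continuity of the resolvent family and $(A-z I)\psi_n\to(A-z_0I)\psi_n=:g_n\in\bfE$) gives $(A-z_0I)^{-1}_{\varOmega,\bfE,\bfF}g_n=\psi_n$. Then I would pass $n\to\infty$: $g_n\to g$ in $\bfE$ (from graph-norm convergence and $\bfE\hookrightarrow\bfF$ being continuous, noting $g_n,g\in\bfE$ and the approximation was designed in $\hatA$-graph norm, so I need the extra input that the $\psi_n$ can be chosen with $(A-z_0I)\psi_n\to g$ in $\bfE$, which is precisely how $\dom(\hatA_{\bfE\shortto\bfE})$ is set up), and boundedness of $(A-z_0I)^{-1}_{\varOmega,\bfE,\bfF}:\bfE\to\bfF$ yields $\psi_n=(A-z_0I)^{-1}_{\varOmega,\bfE,\bfF}g_n\to(A-z_0I)^{-1}_{\varOmega,\bfE,\bfF}g$ in $\bfF$; comparing with $\psi_n\to\Psi$ in $\bfF$ gives $\Psi=(A-z_0I)^{-1}_{\varOmega,\bfE,\bfF}(\hatA-z_0I)\Psi$, which is the left-inverse identity.

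The main obstacle I anticipate is the interchange of the two limits — $z\to z_0$ and the approximation $\psi_n\to\Psi$ — since the convergence in $z$ is only in the \emph{weak} operator topology of $\bfE\to\bfF$, so pointwise weak limits do not automatically commute with norm limits in $\bfE$. The fix is to keep careful track of which convergences are strong and which are weak: the family $\{(A-zI)^{-1}:\bfE\to\bfF\}_{z\in\varOmega}$ is uniformly bounded (implicit in the existence of the limit and the $\delta$-gap), so for a \emph{fixed} vector $g_n$ one has weak convergence, and for the $n$-limit one uses norm convergence $g_n\to g$ in $\bfE$ against the uniform bound; a standard $\varepsilon/3$ argument then closes the diagram. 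The closedness (hence weak closedness) of $\hatA$ in $\bfF$ is the other essential ingredient, used to certify that the weak-$\bfF$ limit $u$ actually lies in $\dom(\hatA)$ with the expected image; this is where Assumption~\ref{ass-virtual}(2) does its work. Everything else is bookkeeping with the embeddings $\bfE\hookrightarrow\bfX\hookrightarrow\bfF$.
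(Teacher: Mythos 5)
Your right-inverse argument is the substantive part of the lemma and it is essentially sound. For $g\in\bfE$ and $z\in\varOmega$, $u_z:=(A-zI)^{-1}g\in\dom(A)$, and by Assumption~\ref{ass-virtual}(2) one has $\jmath(u_z)\in\dom(\hatA)$ with $(\hatA-z_0 I)\jmath(u_z)=\jmath(g)+(z-z_0)\jmath(u_z)$. The existence of the weak limit forces, via the uniform boundedness principle, a uniform $\bfF$-norm bound on $\jmath(u_z)$ near $z_0$, so $(z-z_0)\jmath(u_z)\to 0$ in norm and hence $(\hatA-z_0 I)\jmath(u_z)\to\jmath(g)$ in $\bfF$-norm; meanwhile $\jmath(u_z)\rightharpoonup u:=(A-z_0 I)^{-1}_{\varOmega,\bfE,\bfF}g$ weakly. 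Since the graph of the closed operator $\hatA$ is a norm-closed subspace of $\bfF\times\bfF$, hence weakly closed, $u\in\dom(\hatA)$ and $(\hatA-z_0 I)u=\jmath(g)$. One slip: the claim that $u_z\in\dom(A_{\bfE\shortto\bfE})$ is unjustified, since $Au_z=g+zu_z$ need not lie in $\bfE$ (indeed $u_z$ itself need not lie in $\bfE$); but you never use this assertion, so the argument survives.

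The left-inverse half of the proposal is where things go astray. Once the right-inverse identity is in hand, the left-inverse identity is a one-line consequence of the definition of $\range$: if $\Psi\in\range\big((A-z_0 I)^{-1}_{\varOmega,\bfE,\bfF}\big)$, then $\Psi=(A-z_0 I)^{-1}_{\varOmega,\bfE,\bfF}g$ for some $g\in\bfE$; the right-inverse gives $(\hatA-z_0 I)\Psi=g$, whence $(A-z_0 I)^{-1}_{\varOmega,\bfE,\bfF}(\hatA-z_0 I)\Psi=(A-z_0 I)^{-1}_{\varOmega,\bfE,\bfF}g=\Psi$, which is the very identity you then set out to ``check'' by a long density argument. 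That density argument, moreover, rests on a false premise: you assert $\Psi\in\dom(\hatA_{\bfE\shortto\bfE})$, but by definition that set consists of $\phi$ with $\phi\in\jmath\circ\imath(\bfE)$ and $\hatA\phi\in\jmath\circ\imath(\bfE)$, and a generic $\Psi$ in the range of the regularized resolvent lies only in $\bfF$, not in $\bfE$; knowing $(\hatA-z_0 I)\Psi=g\in\bfE$ gives $\hatA\Psi=g+z_0\Psi$, which is in $\bfE$ only if $\Psi$ already is. So the appeal to Assumption~\ref{ass-virtual}(3) cannot even get started --- fortunately it is also unnecessary, since for this lemma parts (1) and (2) of that assumption suffice.
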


In applications
one needs to consider
not only finite rank perturbations but also
relatively compact perturbations,
allowing in place of $B$ in \eqref{lim}
operators which are $\hatA$-compact,
in the following sense.

\begin{definition}\label{def-compact}
Let $\hatA:\,\bfF\to\bfF$
and $B:\,\bfF\to\bfE$ be linear,
with
$\dom(B)\supset\dom(\hatA)$.
We say that
$B$ is $\hatA$-compact if
$\range\big(B\big\vert\sb{
\{x\in\dom(\hatA);
\,\norm{x}_\bfF^2+\norm{\hatA x}_\bfF^2\le 1
\}}
\big)\subset\bfE$
is precompact.
\end{definition}

We denote the set of $\hatA$-compact operators
for which the limit \eqref{lim} exists
by
\begin{multline*}
\scrQ_{\varOmega,\bfE,\bfF}(A-z_0 I)=
\big\{
\mbox{$B$ is $\hatA$-compact;\ }
\,
\exists \delta>0, \varOmega\cap\sigma(A+B)\cap\mathbb{D}_\delta(z_0)=\emptyset,\\
\exists
\!
\wlim\sb{z\to z_0,\,z\in\varOmega}
(A+ B-z I)^{-1}:\,\bfE\to\bfF
\big\}.
\end{multline*}

\begin{theorem}[Independence from the regularizing operator]
\label{theorem-lap}
Let $A\in\scrC(\bfX)$ and $\bfE\hookrightarrow\bfX\hookrightarrow\bfF$
satisfy Assumption~\ref{ass-virtual}.
Let $\varOmega\subset\C\setminus\sigma(A)$.
Assume that
$z_0\in\sigma\sb{\mathrm{ess}}(A)\cap\p\varOmega$
is a regular point of the essential spectrum
relative to $(\varOmega,\bfE,\bfF)$,
so that there is a limit
$
(A-z_0 I)_{\varOmega,\bfE,\bfF}^{-1}
:=
\wlim\sb{z\to z_0,\,z\in\varOmega}
(A-z I)^{-1}:\,\bfE\to\bfF.
$
Assume that
$B\in\scrC(\bfF,\bfE)$
is $\hatA$-compact.
Then:
\begin{enumerate}
\item
\label{theorem-lap-2}
For each $B\in\scrC(\bfF,\bfE)$ which is $\hatA$-compact
and such that there exists $\delta>0$
which satisfies
$\varOmega\cap\sigma(A+B)\cap\mathbb{D}_\delta(z_0)=\emptyset$,
the following statements are equivalent:
\begin{enumerate}
\item
\label{theorem-lap-2-a}
There is no nonzero solution to
$(\hatA+B-z_0 I)\Psi=0$,
$\Psi\in\range\big((A-z_0 I)^{-1}_{\varOmega,\bfE,\bfF}\big)$;
\item
\label{theorem-lap-2-d}
There 
exists a limit
\[
(A+B-z_0 I)^{-1}_{\varOmega,\bfE,\bfF}
:=\wlim\sb{z\to z_0,\,z\in\varOmega\cap\mathbb{D}_\delta(z_0)}
(A+B-z I)^{-1}:\;\bfE\to\bfF.
\]
\textup(That is, there is the inclusion
$B\in\scrQ_{\varOmega,\bfE,\bfF}(A-z_0 I)$.\textup)
\end{enumerate}
\item
\label{theorem-lap-3}
If any (and hence both)
of the statements from
Part\,1
is satisfied,
then:
\begin{enumerate}
\item
\label{theorem-lap-3-a}
$
\range\big((A-z_0 I)^{-1}_{\varOmega,\bfE,\bfF}\big)
=
\range\big((A+B-z_0 I)^{-1}_{\varOmega,\bfE,\bfF}\big);
$
\item
\label{theorem-lap-3-b}
If the operators
$(A-z I)^{-1}$ converge
as $z\to z_0$, $z\in\varOmega$,
in the strong or uniform
operator topology of mappings $\bfE\to\bfF$,
then 
$(A+B-z I)^{-1}$ converge
as $z\to z_0$, $z\in\varOmega$,
in the same topology;
\item
\label{theorem-lap-3-c}
If
there are Banach spaces $\bfE_0$ and $\bfF_0$
with dense continuous embeddings
$\bfE\hookrightarrow\bfE_0\hookrightarrow\bfX\hookrightarrow
\bfF_0\hookrightarrow\bfF$,
such that the operator
$(A-z_0 I)^{-1}_{\varOmega,\bfE,\bfF}$
extends to a bounded mapping
$
(A-z_0 I)^{-1}_{\varOmega,\bfE,\bfF}:\;\bfE_0\to\bfF_0$,
then
$(A+B-z_0 I)^{-1}_{\varOmega,\bfE,\bfF}$
also extends to a bounded mapping
$\bfE_0\to\bfF_0$.
\end{enumerate}
\end{enumerate}
\end{theorem}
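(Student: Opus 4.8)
The plan is to reduce the whole statement to a Fredholm alternative for the single compact operator $BR$ on $\bfE$, where $R:=(A-z_0 I)^{-1}_{\varOmega,\bfE,\bfF}\colon\bfE\to\bfF$. First I would collect the soft ingredients. By the uniform boundedness principle the resolvents $(A-z I)^{-1}$ are bounded $\bfE\to\bfF$ uniformly for $z$ near $z_0$ in $\varOmega$; by Lemma~\ref{lemma-jk}, $\range(R)\subset\dom(\hatA)$ and $R$ is the two-sided inverse of $\hatA-z_0 I\colon\range(R)\to\bfE$. From the identity $\hatA(A-z I)^{-1}\phi=\phi+z(A-z I)^{-1}\phi$ (and the analogous one for $R$) one sees that $(A-z I)^{-1}$ and $R$ map $\bfE$ boundedly, uniformly near $z_0$, into $Y:=\dom(\hatA)$ with the graph norm; since $B$ is $\hatA$-compact, i.e. $B\colon Y\to\bfE$ is compact, the operators $B(A-z I)^{-1}$ and $BR$ are compact on $\bfE$. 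The one genuinely useful observation is: if $\phi_n\to\phi$ in $\bfE$ and $z_n\to z_0$ in $\varOmega$, then $B(A-z_n I)^{-1}\phi_n\to BR\phi$ in $\bfE$. Indeed $(A-z_n I)^{-1}\phi_n$ stays in a bounded subset of $Y$, while $(A-z_n I)^{-1}\phi\rightharpoonup R\phi$ weakly in $Y$ (weak convergence in $\bfF$ plus the graph identity), so this follows from compactness of $B\colon Y\to\bfE$ and the fact that compact operators map weakly convergent sequences to norm-convergent ones; this one fact replaces the norm convergence $B(A-z I)^{-1}\to BR$, which is \emph{not} available.

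Next I would prove that (a) is equivalent to bijectivity of $I+BR$ on $\bfE$. Using Lemma~\ref{lemma-jk}, a nonzero $\Psi\in\range(R)$ with $(\hatA+B-z_0 I)\Psi=0$ satisfies $\Psi=-R(B\Psi)$, so $\phi:=B\Psi\in\ker(I+BR|_\bfE)\setminus\{0\}$; conversely $\phi\in\ker(I+BR|_\bfE)\setminus\{0\}$ yields $\Psi:=-R\phi\in\range(R)\setminus\{0\}$ with $(\hatA+B-z_0 I)\Psi=0$. Since $BR$ is compact, $\ker(I+BR|_\bfE)=\{0\}$ holds iff $I+BR$ is bijective on $\bfE$; together with this correspondence this gives the equivalence of (a) with bijectivity of $I+BR$ on $\bfE$. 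For (a)$\Rightarrow$(d): assume $I+BR$ bijective and put $R_B:=R(I+BR)^{-1}\colon\bfE\to\bfF$. On $\bfE$ one has $(A+B-z I)^{-1}=(A-z I)^{-1}(I+B(A-z I)^{-1})^{-1}$, the inner factor being invertible on $\bfE$ for $z\in\varOmega\cap\mathbb D_\delta(z_0)$. Fixing $\phi\in\bfE$ and writing $\psi(z):=(I+B(A-z I)^{-1})^{-1}\phi$, a normalize-and-extract argument based on the observation above shows $\psi(z)$ stays bounded in $\bfE$ as $z\to z_0$ (an unbounded sequence would, after rescaling, produce a nonzero element of $\ker(I+BR)$), and then a precompactness argument identifies the unique subsequential limit of $\psi(z)$ as $(I+BR)^{-1}\phi$; hence $\psi(z)\to(I+BR)^{-1}\phi$ in $\bfE$ and $(A+B-z I)^{-1}\phi=(A-z I)^{-1}\psi(z)\rightharpoonup R_B\phi$ in $\bfF$, which is (d). For (d)$\Rightarrow$(a): the existing limit $R_B$ is uniformly bounded $\bfE\to\bfF$; were (a) to fail, the associated $\phi\in\ker(I+BR|_\bfE)\setminus\{0\}$ and $\Psi=-R\phi$ would give, on passing to the limit along $z_n\to z_0$ in $(A+B-z_n I)^{-1}(I+B(A-z_n I)^{-1})\phi=(A-z_n I)^{-1}\phi$, that the left side tends to $0$ in $\bfF$ (uniform bound on $(A+B-z_n I)^{-1}$ and $(I+B(A-z_n I)^{-1})\phi\to(I+BR)\phi=0$ in $\bfE$) while the right side tends weakly to $R\phi=-\Psi$, forcing $\Psi=0$, a contradiction.

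Part~2 is then algebra on $R_B=R(I+BR)^{-1}$ with $(I+BR)^{-1}$ a linear bijection of $\bfE$. Item~\itref{theorem-lap-3-a} is immediate: $\range(R_B)=R\big((I+BR)^{-1}(\bfE)\big)=R(\bfE)=\range(R)$. For~\itref{theorem-lap-3-b}, Banach--Steinhaus gives a uniform bound on $(I+B(A-z I)^{-1})^{-1}$ on $\bfE$, and the convergence $\psi(z)\to(I+BR)^{-1}\phi$ proved above gives strong convergence $(I+B(A-z I)^{-1})^{-1}\to(I+BR)^{-1}$; feeding strong convergence of $(A-z I)^{-1}$ through $R_B(z)=(A-z I)^{-1}(I+B(A-z I)^{-1})^{-1}$ yields strong convergence of $R_B(z)$, and in the uniform case one additionally has $\|B(A-z I)^{-1}-BR\|_{\bfE\to\bfE}\lesssim\|(A-z I)^{-1}-R\|_{\bfE\to\bfF}+|z-z_0|\to0$, whence the inverses converge in norm by a Neumann series and so does $R_B(z)$. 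For~\itref{theorem-lap-3-c}, the same graph-norm computation (using that $\hatA$ is closed) shows that the extension of $R$ to $\bfE_0$ actually maps $\bfE_0$ boundedly into $Y$, so $BR$ extends to a compact operator $\bfE_0\to\bfE\hookrightarrow\bfE_0$; any $\phi\in\ker(I+BR|_{\bfE_0})$ satisfies $\phi=-BR\phi\in\bfE$, hence $\phi\in\ker(I+BR|_\bfE)=\{0\}$, so $I+BR$ is bijective on $\bfE_0$ and $R_B=R(I+BR)^{-1}$ extends to a bounded map $\bfE_0\to\bfE_0\to\bfF_0$.

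The main obstacle is the equivalence (a)$\Leftrightarrow$(d), and within it the boundedness and convergence of $\psi(z)=(I+B(A-z I)^{-1})^{-1}\phi$ as $z\to z_0$. The delicate point is that one has only weak operator convergence $(A-z I)^{-1}\to R$ as maps $\bfE\to\bfF$, so the family $I+B(A-z I)^{-1}$ need not converge in operator norm on $\bfE$ and a direct perturbation-of-invertible argument is unavailable; the argument must instead be carried out at the level of vectors, systematically using that the compact operator $B\colon Y\to\bfE$ turns the weak convergence $(A-z_n I)^{-1}\phi_n\rightharpoonup R\phi$ into norm convergence, together with compactness and normalization to extract the contradictions from a hypothetical nonzero kernel.
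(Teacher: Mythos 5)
The paper itself does not include a proof of this theorem (it is a survey article deferring to the cited monograph \cite{virtual-levels}), so there is no written argument here to compare against line by line. That said, the commented-out intermediate items in the theorem statement, \itref{theorem-lap-2-b}--\itref{theorem-lap-2-c}, which mention $-1\not\in\sigma\sb{\mathrm{p}}\big(B(A-z_0 I)^{-1}_{\varOmega,\bfE,\bfF}\big)$ and its symmetric counterpart, make clear that the intended proof is exactly the Fredholm-alternative argument around the factorization $(A+B-z I)^{-1}=(A-z I)^{-1}\big(I+B(A-z I)^{-1}\big)^{-1}$ that you use, so your route matches the authors' in substance.

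Your proposal is correct, and the one genuinely nontrivial point is handled properly: you recognize that the hypothesis delivers only weak operator convergence of $(A-z I)^{-1}$ as maps $\bfE\to\bfF$, so one cannot pass to norm convergence of $I+B(A-z I)^{-1}$ on $\bfE$ and invoke perturbation of the inverse directly. Your substitute --- using the graph identity $\hatA(A-z I)^{-1}\phi=\phi+z(A-z I)^{-1}\phi$ to place $(A-z I)^{-1}\phi$ in $\dom(\hatA)$ with weak graph-norm convergence, and then the $\hatA$-compactness of $B$ to upgrade this to norm convergence $B(A-z_n I)^{-1}\phi_n\to BR\phi$ in $\bfE$ --- is exactly the device that makes the normalize-and-extract and subsequential-uniqueness arguments work in both directions of (a)$\Leftrightarrow$(d), and it also automatically yields strong convergence of $(I+B(A-z I)^{-1})^{-1}$, which you then exploit in Part~2\itref{theorem-lap-3-b}. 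Two very minor points worth spelling out in a full write-up: (i) the invertibility of $I+B(A-z I)^{-1}$ on $\bfE$ for $z\in\varOmega\cap\mathbb D_\delta(z_0)$ needs the short argument converting $z\notin\sigma(A+B)$ in $\bfX$ into bijectivity of the inner factor on $\bfE$ (your parenthetical assertion glosses this), and (ii) in Part~2\itref{theorem-lap-3-c} one should note that the extension of $(I+BR)^{-1}$ from $\bfE$ to $\bfE_0$ is consistent with the one on $\bfE$ (any preimage of $\phi\in\bfE$ automatically lies in $\bfE$, as you observe). Neither affects the soundness of the argument.
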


\begin{remark}
Regarding Theorem~\ref{theorem-lap}~\itref{theorem-lap-3-c},
it is possible that
$(A+B-z_0 I)^{-1}_{\varOmega,\bfE,\bfF}$
extends to a bounded map
$\bfE_0\to\bfF_0$,
yet
there is no convergence $(A+B-z I)^{-1}\to (A+B-z_0 I)^{-1}_{\varOmega,\bfE,\bfF}$
in the weak operator topology of mappings $\bfE_0\to\bfF_0$.
For example,
the resolvent of the Laplacian in $\R^d$, $d\ge 5$,
converges in the weak operator topology of continuous linear mappings
$L^2_s(\R^d)\to L^2_{-s'}(\R^d)$, $s+s'>2$,
as $z\to z_0=0$, $z\in\C\setminus\overline{\R_{+}}$,
only as long as $s,\,s'>1/2$,
while the limit
$(-\Delta-z_0 I)_{\varOmega}^{-1}$
extends to continuous mappings
$L^2_2(\R^d)\to L^2(\R^d)$,
$L^2(\R^d)\to L^2_{-2}(\R^d)$.
\end{remark}

Now we
introduce the space of virtual states
$\frakM$.
This space appears
in \cite{MR544248}
in the context of Schr\"odinger operators in $\R^3$
(see also \cite[\S1.7]{birman1961spectrum}).


\begin{theorem}[LAP vs. existence of virtual states]
\label{theorem-m}
Let $A\in\scrC(\bfX)$ and $\bfE\hookrightarrow\bfX\hookrightarrow\bfF$
satisfy Assumption~\ref{ass-virtual}.
Let $\varOmega\subset\C\setminus\sigma(A)$.
Let $z_0\in\sigma\sb{\mathrm{ess}}(A)\cap\p\varOmega$
be of rank $r\in\N_0$ relative to $(\varOmega,\bfE,\bfF)$.
For $B\in\scrQ_{\varOmega,\bfE,\bfF}(A-z_0 I)$ (which is nonempty),
define \emph{the space of virtual states} by
\[
\frakM_{\varOmega,\bfE,\bfF}(A-z_0 I)
:=\big\{
\Psi\in\range\big((A+B-z_0 I)^{-1}_{\varOmega,\bfE,\bfF}\big)
\sothat
(\hatA-z_0 I)\Psi=0
\big\}
\subset\bfF,
\]
where $(A+B-z_0 I)^{-1}_{\varOmega,\bfE,\bfF}:\,\bfE\to\bfF$.
Then:
\begin{enumerate}
\item
\label{theorem-m-1}
$\frakM_{\varOmega,\bfE,\bfF}(A-z_0 I)$
does not depend on the choice of
$B\in\scrQ_{\varOmega,\bfE,\bfF}(A-z_0 I)$;
\item
\label{theorem-m-2}
There is the inclusion
$\bfE\cap\ker(A-z_0 I)\subset\frakM_{\varOmega,\bfE,\bfF}(A-z_0 I)$;
\item
\label{theorem-m-3}
$
\dim\frakM_{\varOmega,\bfE,\bfF}(A-z_0 I)=r.
$
\end{enumerate}
\end{theorem}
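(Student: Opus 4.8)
The plan is to obtain Part~1 from Theorem~\ref{theorem-lap}, Part~2 from the resolvent identity, and Part~3 from two inequalities, the nontrivial one exploiting the minimality built into the notion of rank. First observe that every $B\in\scrB_{00}(\bfF,\bfE)$ is $\hatA$-compact (the $\hatA$-graph unit ball is bounded in $\bfF$, and $B$ maps it into a bounded subset of a finite-dimensional subspace of $\bfE$); hence the rank-$r$ perturbation $B_0$ supplied by Definition~\ref{def-virtual} lies in $\scrQ_{\varOmega,\bfE,\bfF}(A-z_0 I)$, which is therefore nonempty. For Part~1, given $B_1,B_2\in\scrQ_{\varOmega,\bfE,\bfF}(A-z_0 I)$, I would apply Theorem~\ref{theorem-lap} with $A$ replaced by $A+B_1$ (for which $z_0$ is a regular point relative to $(\varOmega,\bfE,\bfF)$) and with perturbation $B_2-B_1$, which is $\widehat{A+B_1}$-compact and for which $\varOmega\cap\sigma(A+B_2)\cap\mathbb{D}_\delta(z_0)=\emptyset$; since the limit $(A+B_2-z_0 I)^{-1}_{\varOmega,\bfE,\bfF}$ exists, statement~\itref{theorem-lap-2-d} holds, hence so does~\itref{theorem-lap-2-a}, whence \itref{theorem-lap-3-a} gives $\range\big((A+B_1-z_0 I)^{-1}_{\varOmega,\bfE,\bfF}\big)=\range\big((A+B_2-z_0 I)^{-1}_{\varOmega,\bfE,\bfF}\big)$; intersecting this common range with $\ker(\hatA-z_0 I)$ shows that $\frakM_{\varOmega,\bfE,\bfF}(A-z_0 I)$ is independent of $B$. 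For Part~2, if $\psi\in\bfE\cap\ker(A-z_0 I)$ then $\psi\in\dom(A)$, so $(\hatA-z_0 I)\psi=0$ by the definition of $\hatA$ as the closure of $A_{\bfF\shortto\bfF}$; fixing any $B\in\scrQ_{\varOmega,\bfE,\bfF}(A-z_0 I)$, the resolvent identity gives $\psi=(A+B-z I)^{-1}(B\psi)+(z_0-z)(A+B-z I)^{-1}\psi$ for $z\in\varOmega$ near $z_0$, and since $B\psi,\psi\in\bfE$, letting $z\to z_0$ in the weak operator topology of mappings $\bfE\to\bfF$ (the scalar $z_0-z$ killing the second term) yields $\psi=(A+B-z_0 I)^{-1}_{\varOmega,\bfE,\bfF}(B\psi)\in\frakM_{\varOmega,\bfE,\bfF}(A-z_0 I)$.

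For Part~3 I would take $B=B_0$ (legitimate by Part~1) and set $R:=(A+B_0-z_0 I)^{-1}_{\varOmega,\bfE,\bfF}$, $S:=B_0 R\colon\bfE\to\bfE$, and $Y:=\range B_0$, so $\dim Y=r$. By Lemma~\ref{lemma-jk}, $\widehat{A+B_0}-z_0 I$ and $R$ are mutually inverse on $\range R$; using this one checks that $\Psi\mapsto B_0\Psi$ is a linear isomorphism of $\frakM_{\varOmega,\bfE,\bfF}(A-z_0 I)$ onto $\ker(I-S)\subset Y$, with inverse $g\mapsto Rg$. In particular $\dim\frakM_{\varOmega,\bfE,\bfF}(A-z_0 I)=\dim\ker(I-S)\le\dim Y=r$, which is the upper bound; note $\ker(I-S)\subset Y$ because $Sg=g$ forces $g=Sg\in\range S\subset Y$.

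The lower bound is where the work is. Suppose $\dim\frakM_{\varOmega,\bfE,\bfF}(A-z_0 I)=k<r$. Then $\ker\big((I-S)|_Y\big)=\ker(I-S)$ has dimension $k<\dim Y$, so $(I-S)|_Y\ne0$ and there is $y_0\in Y$ with $(I-S)y_0\ne0$. Pick $\psi^*\in\bfE^*$ with $\psi^*(y_0)=1$ and $\psi^*\big((I-S)y_0\big)\ne0$ (Hahn--Banach, distinguishing whether $(I-S)y_0$ is proportional to $y_0$ or not), set $v:=-y_0\in Y$, and define the rank-one operator $C\colon\bfF\to\bfE$ by $Cu:=\psi^*(B_0 u)\,v$. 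Then $\psi^*\circ(B_0+C)=0$ and $\range(B_0+C)\subset Y\cap\ker\psi^*$, so $B_1:=B_0+C\in\scrB_{00}(\bfF,\bfE)$ has rank $\le r-1$. The choice of $\psi^*$ makes $1+\psi^*(Sv)\ne0$, and a one-line rank-one computation then shows that $(I+RC)\Psi=0$ forces $\Psi=0$ for $\Psi\in\bfF$, so $I+RC$, being a finite-rank perturbation of the identity on $\bfF$, is invertible. Factoring $A+B_1-z I=(A+B_0-z I)\big(I+(A+B_0-z I)^{-1}C\big)$ for $z\in\varOmega$ near $z_0$ and using that $(A+B_0-z I)^{-1}C\to RC$ (the range of $C$ being one-dimensional), one gets that $A+B_1-z I$ is invertible near $z_0$ in $\varOmega$ and that $(A+B_1-z I)^{-1}$ has a weak limit $\bfE\to\bfF$; thus $z_0$ is a regular point of the essential spectrum of $A+B_1$ relative to $(\varOmega,\bfE,\bfF)$. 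This produces a perturbation of rank $<r$ with the defining properties, contradicting the minimality of $r$; hence $\dim\frakM_{\varOmega,\bfE,\bfF}(A-z_0 I)\ge r$, and with the upper bound, equality. The hard part is precisely this rank reduction: choosing $\psi^*$ so that $I+RC$ is injective on $\bfF$ (equivalently, so that $A+B_1$ has no nonzero virtual state in $\range R$), and then verifying by a Fredholm-analytic Sherman--Morrison argument that injectivity of $I+RC$ on $\bfF$ propagates to invertibility of $A+B_1-z I$ in a punctured neighbourhood of $z_0$ and to convergence of the resolvent — where it is essential that $C$ has finite rank so that $(A+B_0-z I)^{-1}C$ and its $(I+\,\cdot\,)^{-1}$ are controlled in the limit.
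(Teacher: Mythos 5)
The paper in question is a survey: Theorem~\ref{theorem-m} is stated there with no proof, the argument being deferred to the companion paper \cite{virtual-levels}, so a direct comparison is impossible. Judged on its own terms, your argument is essentially correct and complete. Parts~1 and~2 are handled cleanly (applying Theorem~\ref{theorem-lap} with $A$ replaced by $A+B_1$ and perturbation $B_2-B_1$; the resolvent identity plus Banach--Steinhaus to kill the $(z_0-z)(A+B-zI)^{-1}\psi$ term). The core of Part~3 is the bijection $\Psi\mapsto B_0\Psi$ between $\frakM_{\varOmega,\bfE,\bfF}(A-z_0I)$ and $\ker(I-S)\subset\range B_0$, where $S=B_0R$; this uses Lemma~\ref{lemma-jk} in exactly the right way (from $(\hatA-z_0I)\Psi=0$ and $\Psi\in\range R$ one gets $(\widehat{A+B_0}-z_0I)\Psi=B_0\Psi$, hence $\Psi=RB_0\Psi$ and $B_0\Psi\in\ker(I-S)$, and conversely). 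The rank-reduction construction for the lower bound is the genuinely nontrivial step and you carry it out carefully: $1+\psi^*(Sv)=\psi^*((I-S)y_0)\ne0$ is exactly the nondegeneracy condition both for injectivity of $I+RC$ and for the Sherman--Morrison denominator, and the identity $\psi^*\circ(B_0+C)=0$ (via $\psi^*(v)=-1$) is what forces $\rank(B_0+C)\le r-1$.

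Two small points you should make explicit rather than leave implicit. First, when invoking Theorem~\ref{theorem-lap} with $A+B_1$ in place of $A$, one must check that $A+B_1,\bfE,\bfF$ still satisfy Assumption~\ref{ass-virtual}; this does hold because $\range B_1\subset\bfE$, so $\dom((A+B_1)_{\bfE\shortto\bfE})=\dom(A_{\bfE\shortto\bfE})$ and the graph norms of $\hatA$ and $\widehat{A+B_1}$ are equivalent, but it should be said. Second, in the factorisation $A+B_1-zI=(A+B_0-zI)\big(I+(A+B_0-zI)^{-1}C\big)$ one needs $I+(A+B_0-zI)^{-1}C$ to be a bijection of $\dom(A)$ as well as of $\bfX$ in order to conclude $z\notin\sigma(A+B_1)$; this is automatic here since the rank-one term and its Sherman--Morrison inverse both preserve $\dom(A)$, but again worth a sentence. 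With those clarifications the proof is sound.
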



\begin{example}
\label{example-laplace-1d}
Let $A=-\p_x^2$ in $L^2(\R)$, with $\dom(A)=H^2(\R)$.
We note that its resolvent
$R_0^{(1)}(z)=(A-z I)^{-1}$, $z\in\C\setminus\overline{\R_{+}}$,
with the integral kernel
$R_0^{(1)}(x,y;z)=\fra{e^{-\sqrt{-z}\abs{x-y}}}{(2\sqrt{-z})}$,
$\Re\sqrt{-z}>0$,
does not extend to a linear mapping
$L^2_s(\R)\to L^2_{-s'}(\R)$,
for some particular $s,\,s'\ge 0$,
which would be bounded uniformly for
$z\in\mathbb{D}_\delta\setminus\overline{\R_{+}}$
with some $\delta>0$.
At the same time,
if $V\in C\sb{\mathrm{comp}}([-a,a],\C)$
is any potential
such that the solution
$\theta_{+}(x)$ to $(-\p_x^2+V)\theta=0$,
$\theta\at{x\ge a}=1$,
remains unbounded for $x\le 0$
(one can take
$V\ge 0$ not identically zero),
so that it is linearly independent
with $\theta_{-}(x)$
(solution which equals one for $x< -a$),
then
for any $s,\,s'>1/2$, $s+s'\ge 2$,
the resolvent
$R_V(z)=(A+V-z I)^{-1}$
extends to a bounded linear mapping
$L^2_s(\R)\to L^2_{-s'}(\R)$
for all $z\in\mathbb{D}_\delta\setminus\overline{\R_{+}}$
with some $\delta>0$
and has a limit
in the strong operator topology
as $z\to z_0=0$, $z\not\in\overline{\R_{+}}$;
thus, $z_0=0$ is a regular point of $A+V$
relative to $\C\setminus\overline{\R_{+}}$.
Since the operator of multiplication by $V(x)$
is $A$-compact,
$z_0=0$ is a virtual level of $A=-\p_x^2$ in $L^2(\R)$
(relative to $\C\setminus\overline{\R_{+}}$).
\end{example}

\begin{definition}[Genuine virtual levels]
If
$\frakM_{\varOmega,\bfE,\bfF}(A-z_0 I)\not\subset\bfX$,
then we say that $z_0$ is a
\emph{genuine virtual level}
of $A$
relative to $\varOmega$,
and call
any $\Psi\in\frakM_{\varOmega,\bfE,\bfF}(A-z_0 I)\setminus\bfX$
a \emph{virtual state of $A$
corresponding to $z_0$ relative to $\varOmega$}.
A virtual level can be both an eigenvalue
and a genuine virtual level,
with a corresponding eigenfunction $\psi\in\bfX$
and a virtual state
$\Psi\in\frakM_{\varOmega,\bfE,\bfF}(A-z_0 I)\setminus\bfX$.
\end{definition}

\begin{theorem}[LAP vs. bifurcations]
\label{theorem-b}
Let $A\in\scrC(\bfX)$ and $\bfE\hookrightarrow\bfX\hookrightarrow\bfF$
satisfy Assumption~\ref{ass-virtual}.
Let $\varOmega\subset\C\setminus\sigma(A)$.
Assume that $z_0\in\sigma\sb{\mathrm{ess}}(A)\cap\p\varOmega$.
\begin{enumerate}
\item
\label{theorem-b-0}
If there is a sequence of perturbations
$
V_j\in\scrB(\bfF,\bfE)$,
$
\lim\sb{j\to\infty}\norm{V_j}_{\bfF\to\bfE}=0$,
and a sequence of eigenvalues
$
z_j\in\sigma\sb{\mathrm{d}}(A+V_j)\cap\varOmega$,
$z_j\to z_0$,
then there is no limit
$\wlim\limits\sb{z\to z_0,\,z\in\varOmega}(A-z I)^{-1}$
in the weak operator topology
of mappings $\bfE\to\bfF$.
\item
\label{theorem-b-1}
Assume that
$z_0$ is a virtual level of $A$
of finite rank relative to $(\varOmega,\bfE,\bfF)$,
and moreover assume that there is
$\delta>0$
and $B\in\scrB_{00}(\bfF,\bfE)$
such that there is a limit
\[
(A+B-z I)^{-1}_{\varOmega,\bfE,\bfF}
:=
\slim\limits\sb{z\to z_0,\,z\in\varOmega\cap\mathbb{D}_\delta(z_0)}(A+B-z I)^{-1}
\]
in the \emph{strong operator topology} of mappings
$\bfE\to\bfF$.
There is $\delta_1\in(0,\delta)$ such that for any sequence
$z_j\in\varOmega\cap\mathbb{D}_{\delta_1}(z_0)$,
$z_j\to z_0$,
there is a sequence
\[
V_j\in\scrB_{00}(\bfF,\bfE),
\qquad
\norm{V_j}\sb{\bfF\to\bfE}\to 0,
\qquad
z_j\in\sigma\sb{\mathrm{d}}(A+V_j),
\quad
j\in\N.
\]
\end{enumerate}
\end{theorem}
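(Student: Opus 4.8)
The plan is to treat the two claims separately. Claim~\itref{theorem-b-0} I would prove by contradiction: assume the limit $R_0:=\wlim_{z\to z_0,\,z\in\varOmega}(A-zI)^{-1}:\bfE\to\bfF$ exists. Then along the given sequence $z_j\to z_0$ the scalars $\langle f^\ast,(A-z_jI)^{-1}e\rangle$ converge for every $e\in\bfE$ and $f^\ast\in\bfF^\ast$, so two applications of the uniform boundedness principle give $C<\infty$ with $\norm{(A-z_jI)^{-1}}_{\bfE\to\bfF}\le C$ for all $j$. Taking eigenfunctions $\psi_j\in\dom(A)$ of $A+V_j$ at $z_j$ normalized by $\norm{\psi_j}_\bfF=1$, one has $(A-z_jI)\psi_j=-V_j\psi_j\in\bfE$ and, since $z_j\notin\sigma(A)$, $\psi_j=-(A-z_jI)^{-1}(V_j\psi_j)$, whence $1\le C\,\norm{V_j}_{\bfF\to\bfE}\to0$ — a contradiction.

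For \itref{theorem-b-1} I would set $T(z):=(A+B-zI)^{-1}$, regarded as a map $\bfE\to\bfF$ for $z\in\varOmega\cap\mathbb{D}_\delta(z_0)$, so that $T(z)\to T(z_0):=(A+B-z_0I)^{-1}_{\varOmega,\bfE,\bfF}$ strongly by hypothesis. The second resolvent identity gives, on $\bfE$,
\[
(A-zI)^{-1}=(I-T(z)B)^{-1}T(z):\;\bfE\to\bfF,
\qquad z\in\varOmega\cap\mathbb{D}_\delta(z_0),
\]
with $T(z)B:\bfF\to\bfF$ and $I-T(z)B$ invertible (because $z\notin\sigma(A)$: a fixed point $\Psi=T(z)B\Psi$ lies in $\range T(z)$ and solves $(A-zI)\Psi=0$, so $\Psi=0$). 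Writing the finite-rank $B$ as $B=\sum_{k=1}^m\langle f_k,\cdot\rangle g_k$ ($f_k\in\bfF^\ast$, $g_k\in\bfE$) one gets $T(z)B=\sum_k\langle f_k,\cdot\rangle T(z)g_k\to T(z_0)B$ in the operator norm of $\bfF$, hence $I-T(z_0)B$ is Fredholm of index $0$; and using Lemma~\ref{lemma-jk} for $A+B$ (for which $z_0$ is a regular point relative to $(\varOmega,\bfE,\bfF)$) one checks that $T(z_0)$ is injective and that $\ker(I-T(z_0)B)=\frakM_{\varOmega,\bfE,\bfF}(A-z_0I)$, which by Theorem~\ref{theorem-m} has dimension $r\ge1$; thus $\range(I-T(z_0)B)$ is closed of codimension $r$.

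I would then fix $\delta_1:=\delta$, take an arbitrary sequence $z_j\in\varOmega\cap\mathbb{D}_{\delta_1}(z_0)$ with $z_j\to z_0$, and build a rank-one perturbation. First I would produce a \emph{fixed} $\phi\in\bfE$ with $T(z_0)\phi\notin\range(I-T(z_0)B)$ by taking a Jordan chain $v_0,\dots,v_p$ of $I-T(z_0)B$ at eigenvalue $1$ of maximal (finite) length: the relations $v_0=T(z_0)Bv_0$, $v_i=v_{i-1}+T(z_0)Bv_i$ force $v_i\in\range(T(z_0))$ for all $i$, while maximality gives $v_p\notin\range(I-T(z_0)B)$, so I take $\phi$ with $T(z_0)\phi=v_p\neq0$. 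Then $\psi_j:=(A-z_jI)^{-1}\phi=(I-T(z_j)B)^{-1}T(z_j)\phi\ne0$, and $m_j:=\norm{\psi_j}_\bfF\to\infty$: if not, along a subsequence $\norm{\psi_j}_\bfF$ is bounded, and then from $\psi_j=T(z_j)\phi+T(z_j)B\psi_j$, the norm convergence $T(z_j)B\to T(z_0)B$, and the finiteness of $\rank T(z_0)B$, a further subsequence has $\psi_j\to\psi_\ast$ with $(I-T(z_0)B)\psi_\ast=T(z_0)\phi=v_p$ — contradicting $v_p\notin\range(I-T(z_0)B)$. Choosing $\ell_j\in\bfF^\ast$ with $\norm{\ell_j}=1$ and $\langle\ell_j,\psi_j\rangle=m_j$ (Hahn--Banach), I set
\[
V_j:=-\,m_j^{-1}\langle\ell_j,\cdot\rangle\,\phi\in\scrB_{00}(\bfF,\bfE),
\qquad\norm{V_j}_{\bfF\to\bfE}\le m_j^{-1}\norm{\phi}_\bfE\to0,
\]
so that $V_j\psi_j=-\phi=-(A-z_jI)\psi_j$, i.e.\ $(A+V_j-z_jI)\psi_j=0$ and $z_j\in\sigma_{\mathrm{p}}(A+V_j)$. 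Since $V_j$ is finite rank, $\sigma_{\mathrm{ess}}(A+V_j)=\sigma_{\mathrm{ess}}(A)\not\ni z_j$, so $z_j\in\sigma(A+V_j)\setminus\sigma_{\mathrm{ess}}(A+V_j)=\sigma_{\mathrm{d}}(A+V_j)$, as wanted.

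The hard part, I expect, is the middle of Part~\itref{theorem-b-1}: upgrading the assumed strong convergence $T(z)\to T(z_0)$ to operator-norm convergence $T(z)B\to T(z_0)B$ and establishing $\ker(I-T(z_0)B)=\frakM_{\varOmega,\bfE,\bfF}(A-z_0I)$. The finiteness of $\rank B$ is exactly what makes the norm convergence available, and only with it in hand do the Jordan-chain/Fredholm argument and the compactness extraction giving $m_j\to\infty$ become legitimate. The remaining ingredients — the resolvent identity, the invariance of $\sigma_{\mathrm{ess}}$ under finite-rank perturbations, and the Hahn--Banach selections — should be routine.
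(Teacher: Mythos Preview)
The paper is a survey: Section~\ref{sect-general} states the results and defers all proofs to the companion article cited as \cite{virtual-levels}, so there is no in-paper proof of Theorem~\ref{theorem-b} to compare against. Your argument must therefore be judged on its own, and it is essentially correct and follows the natural line. Part~\itref{theorem-b-0} is clean. In Part~\itref{theorem-b-1}, the reduction via $(A-zI)^{-1}=(I-T(z)B)^{-1}T(z)$, the upgrade of strong convergence $T(z)\to T(z_0)$ to norm convergence of $T(z)B$ using $\rank B<\infty$, the identification $\ker(I-T(z_0)B)=\frakM_{\varOmega,\bfE,\bfF}(A-z_0I)$ via Lemma~\ref{lemma-jk}, the Jordan-chain selection of $\phi$ with $T(z_0)\phi\notin\range(I-T(z_0)B)$, the compactness extraction forcing $m_j\to\infty$, and the rank-one construction of $V_j$ are all sound.

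One genuine imprecision is the last sentence. You write ``$\sigma_{\mathrm{ess}}(A+V_j)=\sigma_{\mathrm{ess}}(A)$'' to deduce $z_j\in\sigma_{\mathrm{d}}(A+V_j)$, but the paper uses \emph{Browder's} essential spectrum $\sigma_{\mathrm{ess}}=\sigma\setminus\sigma_{\mathrm{d}}$, which is \emph{not} stable under compact (or even finite-rank) perturbations in general. The conclusion is nevertheless correct; the honest route is analytic Fredholm theory on the connected open set $\varOmega\subset\rho(A)$: the family $z\mapsto I+(A-zI)^{-1}V_j$ is an analytic finite-rank perturbation of the identity, and since $\norm{V_j}_{\bfX\to\bfX}\le\norm{\imath}\,\norm{V_j}_{\bfF\to\bfE}\,\norm{\jmath}\to0$, for any fixed $z^\ast\in\varOmega$ one has $z^\ast\in\rho(A+V_j)$ once $j$ is large. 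The Fredholm alternative then gives that $z_j$ is isolated in $\sigma(A+V_j)\cap\varOmega$ with finite-rank Riesz projector, i.e.\ $z_j\in\sigma_{\mathrm{d}}(A+V_j)$. For the finitely many small $j$ where this fails you may simply replace $V_j$ by any finite-rank $W$ with $z_j\in\sigma_{\mathrm{d}}(A+W)$; this does not affect $\norm{V_j}_{\bfF\to\bfE}\to0$. With that patch your proof is complete.
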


\begin{example}
[Virtual levels of $-\Delta+V$ at $z_0\ge 0$]
\label{example-3d-bifurcations}
For $x\in\R^3$ and $\zeta\in\overline{\C\sb{+}}$, define
\[
\psi(x,\zeta)=
\begin{cases}
\fra{e^{\jj \zeta\abs{x} }}{\abs{x}},&\abs{x}\ge 1,
\\
(\fra{(3-\abs{x}^2)}{2})e^{\jj \zeta(1+\abs{x}^2)/2},&0\le \abs{x}<1,
\end{cases}
\quad
\psi(\cdot,\zeta)\in C^2(\R^3),
\]
so that
$-\Delta\psi=\zeta^2 \psi$
for $x\in\R^3\setminus\mathbb{B}^3_1$.
Since $\psi(x,\zeta)\ne 0$
for all $x\in\R^3$ and $\zeta\in\overline{\C\sb{+}}$,
we may define the potential
$V(x,\zeta)$ by the relation
$-\Delta\psi+V\psi=\zeta^2\psi$.
Then, for each $\zeta\in\overline{\C\sb{+}}$,
$V(\cdot,\zeta)\in L^\infty(\R^3)$
is
piecewise smooth,
with
$\supp V\subset\mathbb{B}^3_1$.
For
$\zeta\in\C\sb{+}$,
one has $z=\zeta^2\in\sigma\sb{\mathrm{p}}(-\Delta+V(\zeta))$,
so
for each $\zeta_0\ge 0$
there is an eigenvalue family bifurcating from
$z_0=\zeta_0^2\in\sigma\sb{\mathrm{ess}}(-\Delta+V(\cdot,\zeta_0))
=\overline{\R_{+}}$ into $\C_{+}$.
By Theorem~\ref{theorem-b}~\itref{theorem-b-0},
$z_0=\zeta_0^2$
is a virtual level of
$-\Delta+V(x,\zeta_0)$.
\end{example}

\begin{lemma}[The Fredholm alternative]
Let $A\in\scrC(\bfX)$ and $\bfE\hookrightarrow\bfX\hookrightarrow\bfF$
satisfy Assumption~\ref{ass-virtual}.
Let $\varOmega\subset\C\setminus\sigma(A)$.
Assume that $z_0\in\sigma\sb{\mathrm{ess}}(A)\cap\p\varOmega$
is of rank $r\in\N_0$ relative to
$(\varOmega,\bfE,\bfF)$.
Then there is a projector
$P\in\End(\bfE)$,
with $\rank P=r$, such that
for any $B\in\scrQ_{\varOmega,\bfE,\bfF}(A-z_0 I)$
the problem
\[
(\hatA-z_0 I)u=f,
\qquad
f\in\bfE,
\qquad
u\in\range\big((A+B-z_0)^{-1}_{\varOmega,\bfE,\bfF}\big)\subset\bfF,
\]
has a solution
if and only if $Pf=0$.
This solution
is unique under an additional requirement $Q u=0$,
with $Q\in\End(\bfF)$
a projection
onto $\frakM_{\varOmega,\bfE,\bfF}(A-z_0 I)\subset\bfF$.
\end{lemma}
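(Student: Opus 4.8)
The plan is to reduce the problem on $\bfF$ to a Fredholm equation of the second kind on $\bfE$ and then to invoke the Riesz--Schauder theory. Fix $B\in\scrQ_{\varOmega,\bfE,\bfF}(A-z_0 I)$ (this set is nonempty by the definition of rank; cf.\ Theorem~\ref{theorem-m}), and write $R_B:=(A+B-z_0 I)^{-1}_{\varOmega,\bfE,\bfF}\colon\bfE\to\bfF$. Since $B$ is $\hatA$-compact it is $\hatA$-bounded with relative bound zero, so the closure of $A+B-z_0 I$ in $\bfF$ is $\hatA+B-z_0 I$ with domain $\dom(\hatA)$, and $z_0$ is a regular point of $A+B$ relative to $(\varOmega,\bfE,\bfF)$. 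By Lemma~\ref{lemma-jk} applied to $A+B$, the operator $R_B$ is injective, $\range(R_B)\subset\dom(\hatA)$, and $R_B$ is a two-sided inverse of $\hatA+B-z_0 I\colon\range(R_B)\to\bfE$. Using the operator identity $\hatA-z_0 I=(\hatA+B-z_0 I)-B$ on $\dom(\hatA)$, one gets, for every $g\in\bfE$,
\[
(\hatA-z_0 I)R_B g=(\hatA+B-z_0 I)R_B g-BR_B g=g-BR_B g=(I-BR_B)g .
\]
Hence, by the bijectivity of $R_B\colon\bfE\to\range(R_B)$, the problem ``$(\hatA-z_0 I)u=f$, $u\in\range(R_B)$'' is equivalent (under $u=R_B g$) to the equation $(I-BR_B)g=f$ in $\bfE$, with the two solution sets in bijective correspondence.

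Next I would check that $BR_B\colon\bfE\to\bfE$ is compact. For $\|g\|_\bfE\le 1$, the identity $\hatA R_B g=g-BR_B g+z_0 R_B g$, together with the continuity of $\bfE\hookrightarrow\bfF$ and of $R_B\colon\bfE\to\bfF$ and the $\hatA$-boundedness of $B$ with relative bound $<1$, produces a uniform bound on $\|R_B g\|_\bfF+\|\hatA R_B g\|_\bfF$; since $B$ is $\hatA$-compact (Definition~\ref{def-compact}), it maps this bounded family into a precompact subset of $\bfE$. Therefore $I-BR_B$ is a Fredholm operator of index zero on $\bfE$: its range is closed with finite codimension equal to $\dim\ker(I-BR_B)$. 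Restricting the bijection $u\mapsto g=(\hatA+B-z_0 I)u$ to homogeneous solutions identifies $\ker(I-BR_B)$ with $\{\Psi\in\range(R_B)\sothat(\hatA-z_0 I)\Psi=0\}=\frakM_{\varOmega,\bfE,\bfF}(A-z_0 I)$, which has dimension $r$ by Theorem~\ref{theorem-m}~\itref{theorem-m-3}. Thus the codimension of $\range(I-BR_B)$ in $\bfE$ equals $r$, and the problem is solvable precisely when $f\in\range(I-BR_B)$.

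It remains to make the solvability condition, and hence the projector $P$, independent of $B$. Pick a fixed $B_0\in\scrB_{00}(\bfF,\bfE)$ of rank $r$ for which $z_0$ is a regular point of $A+B_0$. Since the classes of $\hatA$-compact and of $\widehat{A+B_0}$-compact perturbations coincide (the graph norm of $\hatA$ is unchanged, up to equivalence, by the $\hatA$-bounded perturbation $B_0$), the operator $B-B_0$ is $\widehat{A+B_0}$-compact, and $(A+B_0)+(B-B_0)=A+B$; Theorem~\ref{theorem-lap}~\itref{theorem-lap-3-a} applied to $A+B_0$ then gives $\range(R_B)=\range(R_{B_0})$. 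Consequently $\range(R_B)$, and with it the predicate ``$(\hatA-z_0 I)u=f$ has a solution $u\in\range(R_B)$'', does not depend on $B\in\scrQ_{\varOmega,\bfE,\bfF}(A-z_0 I)$, so neither does the subspace $\range(I-BR_B)\subset\bfE$. Choosing any bounded projector $P\in\End(\bfE)$ with $\ker P=\range(I-BR_B)$ (possible, as this subspace is closed of finite codimension $r$) yields $\rank P=r$ and the claimed equivalence for every $B$. For uniqueness: if $u,u'$ are two solutions then $u-u'\in\range(R_B)$ and $(\hatA-z_0 I)(u-u')=0$, i.e.\ $u-u'\in\frakM_{\varOmega,\bfE,\bfF}(A-z_0 I)$; this space being finite-dimensional is complemented in $\bfF$, and fixing a bounded projector $Q\in\End(\bfF)$ onto it, one checks that for any solution $u_0$ the element $u_0-Q u_0$ is again a solution, satisfies $Q u=0$, and is the unique such solution because $Q$ acts as the identity on $\frakM_{\varOmega,\bfE,\bfF}(A-z_0 I)$.

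The analytic substance is already contained in Lemma~\ref{lemma-jk} and Theorems~\ref{theorem-lap} and~\ref{theorem-m}, so the main obstacle here is organizational: one must check that the reduction identity $(\hatA-z_0 I)R_B g=(I-BR_B)g$ and the compactness of $BR_B$ survive for an arbitrary $\hatA$-compact $B\in\scrQ_{\varOmega,\bfE,\bfF}(A-z_0 I)$, not only for finite-rank $B$, and --- what is essential for obtaining a single $P$ valid for all such $B$ --- that $\range(R_B)$, and therefore the solvability obstruction $\range(I-BR_B)$, is genuinely independent of $B$. This last point is where Lemma~\ref{lemma-jk}, Theorem~\ref{theorem-lap}~\itref{theorem-lap-3-a}, and the invariance of the graph norm of $\hatA$ under relatively compact perturbations must be combined with some care.
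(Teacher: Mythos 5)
Your reduction to the Fredholm equation $(I-BR_B)g=f$ on $\bfE$ via $u=R_B g$, the Riesz--Schauder argument identifying $\ker(I-BR_B)\cong\frakM_{\varOmega,\bfE,\bfF}(A-z_0 I)$ and hence $\codim\range(I-BR_B)=r$, and the $B$-independence of $\range(R_B)$ via Theorem~\ref{theorem-lap}~\itref{theorem-lap-3-a} constitute a correct proof and are precisely what the surrounding framework (Lemma~\ref{lemma-jk}, Theorems~\ref{theorem-lap} and~\ref{theorem-m}) is built to support. The one step worth spelling out rather than invoking in passing is the equality $\dom(\widehat{A+B})=\dom(\hatA)$ with equivalent graph norms (coming from the claim that $\hatA$-compactness implies $\hatA$-bound zero for the closed operator $\hatA$ in the Banach space $\bfF$), which you use twice: once to place $\range(R_B)$ inside $\dom(\hatA)$ so that $(\hatA-z_0 I)R_B g$ is defined, and once to upgrade the free $\widehat{A+B}$-graph-norm bound $\norm{R_B g}_\bfF+\norm{(\hatA+B-z_0 I)R_B g}_\bfF\le C\norm{g}_\bfE$, which follows directly from $(\hatA+B-z_0 I)R_B=I$, to the $\hatA$-graph-norm bound that Definition~\ref{def-compact} requires in order to conclude that $BR_B$ is compact on $\bfE$.
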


The next result is related to \cite[Proposition 4.1]{agmon1998perturbation}
(see Remark~\ref{remark-roma}).

\begin{theorem}[Independence from regularizing spaces]
\label{theorem-a}
Let $A\in\scrC(\bfX)$.
Let $\bfE_i$ and $\bfF_i$, $i=1,\,2$,
be Banach spaces with dense continuous embeddings
\[
\bfE_i\mathop{\longhookrightarrow}\limits\sp{\imath_i}
\bfX\mathop{\longhookrightarrow}\limits\sp{\jmath_i}
\bfF_i,
\qquad
i=1,\,2.
\]
Assume that
$\bfE_1$ and $\bfE_2$ are mutually dense, 
in the sense that
$\imath_i^{-1}\big(\imath_1(\bfE_1)\cap\imath_2(\bfE_2)\big)$
are dense in $\bfE_i$,
$i=1,\,2$,
that $\bfF_1,\,\bfF_2$ are continuously embedded into 
a Hausdorff vector space $\bfG$,
with $\jmath_1(x)=\jmath_2(x)$ (as an element of $\bfG$) for each $x\in\bfX$,
and that
there is an extension $\hatA\in\scrC\big(\bfF_1+\bfF_2\big)$ of $A$
with dense domain
$\dom(\hatA)\subset \bfF_1+\bfF_2$.
Let
\[
\dom(A_{\bfE_i\shortto\bfE_i})
=\{\phi\in\bfE_i\sothat\,
\imath_i(\phi)\in\dom(A),
\,A\imath_i(\phi)\in \imath_i(\bfE_i)\},
\quad
i=1,\,2
\]
and 
\[
\dom(\hatA_{\bfE_i\shortto\bfE_i})
=\{\phi\in\bfE_i\sothat\,
\jmath_i\circ\imath_i(\phi)\in\dom(\hatA),
\,\hatA\jmath_i\circ\imath_i(\phi)\in\jmath_i\circ\imath_i(\bfE_i)\},
\quad
i=1,\,2,
\]
and assume that for $i=1,\,2$
the space
$\jmath_i\circ\imath_i(\dom(A_{\bfE_i\shortto\bfE_i}))$
is dense in the space
$\jmath_i\circ\imath_i(\dom(\hatA_{\bfE_i\shortto\bfE_i}))$
in the topology induced by the graph norm of $\hatA$
(it follows that both triples $A,\,\bfE_1,\,\bfF_1$
and
$A,\,\bfE_2,\,\bfF_2$
satisfy Assumption~\ref{ass-virtual}~\itref{ass-virtual-3}).

Let $\varOmega\subset\C\setminus\sigma(A)$
and assume that
$z_0\in\sigma\sb{\mathrm{ess}}(A)\cap\p\varOmega$
and that
$\scrQ_{\varOmega,\bfE_i,\bfF_i}(A-z_0 I)\ne\emptyset$,
$i=1,\,2$.
Then
$z_0$ is of the same rank relative to
$(\varOmega,\bfE_1,\bfF_1)$
and relative to
$(\varOmega,\bfE_2,\bfF_2)$,
and moreover
\[
\frakM_{\varOmega,\bfE_1,\bfF_1}(A-z_0 I)
=
\frakM_{\varOmega,\bfE_2,\bfF_2}(A-z_0 I).
\]
\end{theorem}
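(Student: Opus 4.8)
The plan is to reduce the assertion to an equality of the spaces of virtual states and to establish that equality by transporting the relevant limits of perturbed resolvents through a common ``intersection'' subspace for the domains and a common ``sum'' Banach space for the ranges. By Theorem~\ref{theorem-m}~\itref{theorem-m-3} the rank of $z_0$ relative to a triple equals the dimension of the corresponding space of virtual states, so it is enough to prove
\[
\frakM_{\varOmega,\bfE_1,\bfF_1}(A-z_0 I)=\frakM_{\varOmega,\bfE_2,\bfF_2}(A-z_0 I)
\]
as subsets of $\bfG$ (each side lies in $\bfF_1$, resp.\ $\bfF_2$, which embed continuously into $\bfG$); the equality of ranks is then immediate. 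By symmetry it suffices to prove the inclusion $\subseteq$.

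\noindent\textbf{Setup and normalization.}
Write $\bfE_\cap:=\imath_1(\bfE_1)\cap\imath_2(\bfE_2)\subset\bfX$; by the mutual-density hypothesis $\imath_i^{-1}(\bfE_\cap)$ is dense in $\bfE_i$. Let $\bfF_0:=\jmath_1(\bfF_1)+\jmath_2(\bfF_2)\subset\bfG$, a Banach space for $\norm{y}_{\bfF_0}=\inf\{\norm{y_1}_{\bfF_1}+\norm{y_2}_{\bfF_2}\sothat y=y_1+y_2\}$, into which $\bfF_1$, $\bfF_2$ and $\bfX$ embed continuously and compatibly (here $\jmath_1(x)=\jmath_2(x)$ for $x\in\bfX$ is used); as a Banach space $\bfF_0$ is Hausdorff with unique weak limits, and it is in $\bfF_0$ that limits obtained in $\bfF_1$ and in $\bfF_2$ will be matched. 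Let $r_i$ be the rank of $z_0$ relative to $(\varOmega,\bfE_i,\bfF_i)$, realized by $B_i\in\scrB_{00}(\bfF_i,\bfE_i)$ of rank $r_i$; writing $B_i=\sum_k\langle\cdot,\mu_k^{(i)}\rangle e_k^{(i)}$ and approximating the finitely many $e_k^{(i)}\in\bfE_i$ in the norm of $\bfE_i$ by vectors of the dense subspace $\imath_i^{-1}(\bfE_\cap)$, we obtain a nearby perturbation; since $\scrQ_{\varOmega,\bfE_i,\bfF_i}(A-z_0 I)$ is stable under small perturbations and $\frakM_{\varOmega,\bfE_i,\bfF_i}(A-z_0 I)$ is independent of the choice of $B_i\in\scrQ_{\varOmega,\bfE_i,\bfF_i}(A-z_0 I)$ (Theorem~\ref{theorem-m}~\itref{theorem-m-1}), we may assume $\range(B_i)\subset\bfE_\cap$ with $B_i$ still of rank $r_i$. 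Set $R_i:=(A+B_i-z_0 I)^{-1}_{\varOmega,\bfE_i,\bfF_i}:\bfE_i\to\bfF_i$. By Lemma~\ref{lemma-jk} applied to $A+B_i$, the operator $R_i$ is a two-sided inverse of $\hatA+B_i-z_0 I$ on $\range(R_i)$; hence every $\Psi\in\frakM_{\varOmega,\bfE_i,\bfF_i}(A-z_0 I)$ satisfies $(\hatA-z_0 I)\Psi=0$ and $\Psi=R_i(B_i\Psi)$ with $B_i\Psi\in\bfE_\cap$.

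\noindent\textbf{The inclusion.}
Let $\Psi\in\frakM_{\varOmega,\bfE_1,\bfF_1}(A-z_0 I)$ and put $\eta:=B_1\Psi\in\bfE_\cap$, so that $u_z:=(A+B_1-z I)^{-1}\eta\to\Psi$ weakly in $\bfF_1$, hence in $\bfF_0$, as $z\to z_0$ in $\varOmega$. From the resolvent identities
\[
(A+B_1-z I)^{-1}=(A-z I)^{-1}\bigl(I-B_1(A+B_1-z I)^{-1}\bigr),
\qquad
(A-z I)^{-1}=(A+B_2-z I)^{-1}\bigl(I+B_2(A-z I)^{-1}\bigr),
\]
which are legitimate because $u_z\in\dom(A)\subset\bfX\subset\bfF_1\cap\bfF_2$ and $\range(B_1),\range(B_2)\subset\bfE_\cap$, one gets $u_z=(A+B_2-z I)^{-1}\xi_z$ with $\xi_z:=\eta-B_1 u_z+B_2 u_z\in\bfE_\cap$. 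Because $B_1$ has finite rank with range in $\bfE_\cap$ and $u_z\to\Psi$ weakly in $\bfF_1$, the scalars $\mu_k^{(1)}(u_z)$ converge, so $B_1 u_z\to B_1\Psi=\eta$ in the norms of $\bfE_1$ and $\bfE_2$, and thus $\eta-B_1 u_z\to 0$ in $\bfE_2$. Applying $B_2$ to $u_z=(A+B_2-z I)^{-1}\xi_z$ yields, on the finite-dimensional space $\range(B_2)$, the equation
\[
\bigl(I-B_2(A+B_2-z I)^{-1}\bigr)(B_2 u_z)=B_2(A+B_2-z I)^{-1}(\eta-B_1 u_z),
\]
where $(A+B_2-z I)^{-1}$ is uniformly bounded from $\bfE_2$ to $\bfF_2$ near $z_0$ and converges strongly, so that the left operator restricts to a family on $\range(B_2)$ converging (the domain being finite-dimensional) to $(I-B_2 R_2)|_{\range(B_2)}$, while the right-hand side tends to $0$. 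One then shows that $B_2 u_z$ converges to some $\zeta$ in the kernel of $(I-B_2 R_2)|_{\range(B_2)}$, which is carried by $R_2$ onto $\frakM_{\varOmega,\bfE_2,\bfF_2}(A-z_0 I)$; consequently $\xi_z\to\zeta$ in $\bfE_2$ and, by the uniform bound, $u_z=(A+B_2-z I)^{-1}\xi_z\to R_2\zeta$ weakly in $\bfF_2$, hence in $\bfF_0$. Uniqueness of weak limits in $\bfF_0$ forces $\Psi=R_2\zeta\in\range(R_2)$, and together with $(\hatA-z_0 I)\Psi=0$ this gives $\Psi\in\frakM_{\varOmega,\bfE_2,\bfF_2}(A-z_0 I)$, completing the inclusion.

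\noindent\textbf{The main obstacle.}
The delicate step is the convergence of $B_2 u_z$: a priori $u_z$ converges only in the topology of $\bfF_1$, the functionals $\mu_k^{(2)}\in\bfF_2^*$ defining $B_2$ need not be $\bfF_1$-continuous, and one does not even see that $B_2 u_z$ stays bounded. Excluding blow-up of $B_2 u_z$ along the kernel of $I-B_2 R_2$, and thereby passing the convergence of the resolvent iterate from $\bfF_1$ to $\bfF_2$, using that $z_0$ is a \emph{regular} point of $A+B_2$ relative to $(\varOmega,\bfE_2,\bfF_2)$ and, presumably, the full strength of the graph-norm density in Assumption~\ref{ass-virtual} together with the $\hatA$-compactness machinery, is the technical heart of the proof; the remaining ingredients (the resolvent bookkeeping, the reduction, and the symmetry) are routine.
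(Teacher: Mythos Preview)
The paper does not contain a proof of Theorem~\ref{theorem-a}. Section~\ref{sect-general} is explicitly a sketch of results from \cite{virtual-levels}; this theorem, like Lemma~\ref{lemma-jk} and Theorems~\ref{theorem-lap}, \ref{theorem-m}, \ref{theorem-b}, is stated without argument. There is therefore nothing in the present paper to compare your attempt against.

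Judged on its own, your plan is the natural one and the reductions are sound, but you have not actually proved the theorem: you yourself isolate ``the main obstacle'' --- the convergence (or even boundedness) of $B_2 u_z$ --- and leave it unresolved. That gap is real. From $(I-T_z)(B_2 u_z)=f_z$ with $T_z\to T:=B_2 R_2\vert_{\range(B_2)}$ and $f_z\to 0$, nothing follows about $B_2 u_z$ once $I-T$ is singular on $\range(B_2)$, which is exactly the case when $z_0$ is a virtual level relative to $(\varOmega,\bfE_2,\bfF_2)$. The vague appeal to ``$\hatA$-compactness machinery'' and ``graph-norm density'' does not touch the difficulty, which is purely that the functionals defining $B_2$ live in $\bfF_2^*$ while your only control of $u_z$ is in $\bfF_1$.

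The gap can be closed, and precisely by the common space $\bfF_0=\bfF_1+\bfF_2$ you already introduced. Suppose $\norm{B_2 u_{z_j}}_{\bfE_2}\to\infty$; normalize $v_j:=B_2 u_{z_j}/\norm{B_2 u_{z_j}}_{\bfE_2}\to v$ in the finite-dimensional space $\range(B_2)$, so $(I-T)v=0$ and $\norm{v}=1$. Then $\xi_{z_j}/\norm{B_2 u_{z_j}}_{\bfE_2}\to v$ in $\bfE_2$, and the uniform bound on $(A+B_2-zI)^{-1}:\bfE_2\to\bfF_2$ forces $u_{z_j}/\norm{B_2 u_{z_j}}_{\bfE_2}\to R_2 v$ weakly in $\bfF_2$, hence in $\bfF_0$. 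But $u_{z_j}$ is weakly bounded in $\bfF_1$ (it converges to $\Psi$), so $u_{z_j}/\norm{B_2 u_{z_j}}_{\bfE_2}\to 0$ in $\bfF_1$, hence weakly in $\bfF_0$. Uniqueness of weak limits in the Banach space $\bfF_0$ gives $R_2 v=0$, whence $v=(\hatA+B_2-z_0 I)R_2 v=0$ by Lemma~\ref{lemma-jk}, a contradiction. So $B_2 u_z$ is bounded; along any convergent subsequence $B_2 u_{z_j}\to\zeta$ one gets $u_{z_j}\to R_2\zeta$ weakly in $\bfF_2$ and $u_{z_j}\to\Psi$ weakly in $\bfF_1$, so $\Psi=R_2\zeta\in\range(R_2)$ by the same uniqueness in $\bfF_0$. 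Together with $(\hatA-z_0 I)\Psi=0$ this places $\Psi$ in $\frakM_{\varOmega,\bfE_2,\bfF_2}(A-z_0 I)$, and the inclusion is complete. (You do not actually need full convergence of $B_2 u_z$; one convergent subsequence suffices.)
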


Above,
$
\bfE_1\cap\bfE_2
=\{\phi\in\bfX\sothat
\exists(\phi_1,\phi_2)\in\bfE_1\times\bfE_2,
\ \imath_1(\phi_1)=\imath_2(\phi_2)=\phi
\},
$
with
\[
\norm{\phi}_{\bfE_1\cap\bfE_2}=
\norm{\phi_1}_{\bfE_1}+\norm{\phi_2}_{\bfE_2}
,
\]
and
$
\bfF_1+\bfF_2
=
\{\psi_1+\psi_2\in\bfG\sothat
(\psi_1,\psi_2)\in\bfF_1\times\bfF_2
\},
$
with the norm
\[
\norm{\psi}_{\bfF_1+\bfF_2}
=
\inf\limits\sb{
\psi=\psi_1+\psi_2,
\ (\psi_1,\psi_2)\in\bfF_1\times\bfF_2
}
\big(
\norm{\psi_1}_{\bfF_1}+\norm{\psi_2}_{\bfF_2}
\big).
\]
We point out that
if $\bfE_1$ and $\bfE_2$ are not mutually dense
(or similarly if $\bfF_1$ and $\bfF_2$
are not inside a common vector space $\bfG$),
then there is a nontrivial dependence of the rank
of $z_0\in\sigma\sb{\mathrm{ess}}(A)$
from the choice of regularizing subspaces;
see Example~\ref{example-1d}.

\begin{lemma}[Virtual levels of the adjoint]
\label{lemma-a-2}
Let $A\in\scrC(\bfX)$ and
$\bfE\mathop{\longhookrightarrow}\limits\sp{\imath}\bfX\mathop{\longhookrightarrow}\limits\sp{\jmath}\bfF$
satisfy Assumption~\ref{ass-virtual}.
Moreover, assume that $\bfE$ be reflexive
and let
$A^*:\,\bfX^*\to\bfX^*$ have a closable extension
to a mapping $\widehat{A^*}:\bfE^*\to\bfE^*$
with domain $\dom\big(\widehat{A^*}\big)\subset\bfE^*$.
Let
\[
\mbox{\ac{\sout{$\dom(A^*_{\bfF^*\shortto\bfF^*})
=\{\phi\in \dom(A^*)\cap \jmath^*(\bfF^*),\,A^*\phi\in \jmath^*(\bfF^*)\}$}}}
\]
\[
\dom(A^*_{\bfF^*\shortto\bfF^*})
=\{\theta\in\bfF^*\sothat
\jmath^*(\theta)\in\dom(A^*),
\,A^*\jmath^*(\theta)\in\jmath^*(\bfF^*)\}
\]
and 
\[
\mbox{\ac{\sout{$\dom\big(\widehat{A^*}_{\bfF^*\shortto\bfF^*}\big)
=\{\phi\in \dom(\widehat{A^*})\cap\imath^*\circ\jmath^*(\bfF^*),\,\widehat{A^*}\phi\in\imath^*\circ\jmath^*(\bfF^*)\}$}}}
\]
\[
\dom\big(\widehat{A^*}_{\bfF^*\shortto\bfF^*}\big)
=\{\theta\in\bfF^*\sothat\,
\imath^*\circ\jmath^*(\theta)\in\dom(\widehat{A^*}),
\,\widehat{A^*}\,\imath^*\circ\jmath^*(\theta)\in\imath^*\circ\jmath^*(\bfF^*)\}
\]
and assume that
\ac{\sout{
$\jmath^*\big(\dom\big(A^*_{\bfF^*\shortto\bfF^*}\big)\big)$ is dense in
$\dom\big(\widehat{A^*}_{\bfF^*\shortto\bfF^*}\big)$
}}
$\imath^*\circ\jmath^*\big(\dom\big(A^*_{\bfF^*\shortto\bfF^*}\big)\big)$ is dense in
$\imath^*\circ\jmath^*\big(\dom\big(\widehat{A^*}_{\bfF^*\shortto\bfF^*}\big)\big)$
in the topology induced
by the graph norm of $\widehat{A^*}$
%
(that is, $A^*$ and
$\bfF^*
\mathop{\longhookrightarrow}\limits\sp{\jmath^*}\bfX^*
\mathop{\longhookrightarrow}\limits\sp{\imath^*}\bfE^*$
satisfy Assumption~\ref{ass-virtual}).

Let $\varOmega\subset\C\setminus\sigma(A)$.
Assume that
$z_0\in\sigma\sb{\mathrm{ess}}(A)\cap\p\varOmega$
is an exceptional point of the essential spectrum of $A$
of rank $r\in\N_0\sqcup\{\infty\}$
relative to $(\varOmega,\bfE,\bfF)$.
Then
$\bar z_0\in\sigma\sb{\mathrm{ess}}(A^*)$
is an exceptional point
of the essential spectrum of $A^*$
of rank $r$
relative to $(\varOmega^*,\,\bfF^*,\,\bfE^*)$,
with
$\varOmega^*:=\big\{\zeta\in\C\sothat\bar\zeta\in\varOmega\big\}$.
\end{lemma}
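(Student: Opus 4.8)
The plan is to push everything through the Banach-space duality, passing from the triple $\bfE\hookrightarrow\bfX\hookrightarrow\bfF$ to $\bfF^*\hookrightarrow\bfX^*\hookrightarrow\bfE^*$ by means of the identity $\bigl((A+B-\zeta I)^{-1}\bigr)^*=(A^*+B^*-\bar\zeta I)^{-1}$. The preliminary observations are routine: $\sigma(A^*)=\overline{\sigma(A)}$, and since the adjoint of a finite-rank Riesz projector is a finite-rank Riesz projector, $\sigma_{\mathrm{ess}}(A^*)=\overline{\sigma_{\mathrm{ess}}(A)}$ for Browder's essential spectrum, so that $\bar z_0\in\sigma_{\mathrm{ess}}(A^*)\cap\partial\varOmega^*$ and $\varOmega^*\subset\C\setminus\sigma(A^*)$; for $B\in\scrB_{00}(\bfF,\bfE)$ the adjoint $B^*\in\scrB_{00}(\bfE^*,\bfF^*)$ has the same rank; and the hypotheses are exactly what is needed to know that $A^*$ together with $\bfF^*\hookrightarrow\bfX^*\hookrightarrow\bfE^*$ satisfies Assumption~\ref{ass-virtual}. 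It then suffices to prove the two inequalities $\rank(\bar z_0,A^*)\le r$ and $\rank(\bar z_0,A^*)\ge r$ (with the obvious convention when $r=\infty$).

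\emph{Forward inequality.} Let $B\in\scrB_{00}(\bfF,\bfE)$ be a rank-$r$ witness for $z_0$ relative to $(\varOmega,\bfE,\bfF)$, so that $z_0$ is a regular point of $A+B$ and $R:=(A+B-z_0 I)^{-1}_{\varOmega,\bfE,\bfF}\colon\bfE\to\bfF$ exists; by the uniform boundedness principle the operators $(A+B-z I)^{-1}\colon\bfE\to\bfF$, $z\in\varOmega$, are uniformly bounded for $z$ near $z_0$. Since $(A^*+B^*-\zeta I)^{-1}=\bigl((A+B-\bar\zeta I)^{-1}\bigr)^*\colon\bfF^*\to\bfE^*$, these operators are uniformly bounded for $\zeta\in\varOmega^*$ near $\bar z_0$, and for $\eta\in\bfF^*$, $e\in\bfE$ one has $\langle(A^*+B^*-\zeta I)^{-1}\eta,\,e\rangle=\langle\eta,\,(A+B-\bar\zeta I)^{-1}e\rangle\to\langle\eta,\,Re\rangle=\langle R^*\eta,\,e\rangle$ as $\zeta\to\bar z_0$. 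Since $\bfE$ is reflexive, the weak operator topology of mappings $\bfF^*\to\bfE^*$ is precisely that of convergence tested against $\bfE$; hence $(A^*+B^*-\zeta I)^{-1}\to R^*$ in that topology, so $B^*\in\scrQ_{\varOmega^*,\bfF^*,\bfE^*}(A^*-\bar z_0 I)$ and $\rank(\bar z_0,A^*)\le\rank B^*=r$; in particular $\bar z_0$ is exceptional.

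\emph{Reverse inequality, finite case.} Assume $r<\infty$; by the forward step $r':=\rank(\bar z_0,A^*)\le r<\infty$, so Theorem~\ref{theorem-m} applies to $A^*$ and gives $\dim\frakM_{\varOmega^*,\bfF^*,\bfE^*}(A^*-\bar z_0 I)=r'$. I would compute this dimension with the (in general non-minimal) witness $B^*$, which is allowed by Theorem~\ref{theorem-m}(1). By Lemma~\ref{lemma-jk} applied to $A^*+B^*$, the limit $R^*$ is a right inverse of $\widehat{A^*}+B^*-\bar z_0 I$, so $(\widehat{A^*}-\bar z_0 I)R^*=I-B^*R^*$ on $\bfF^*$; hence $\frakM_{\varOmega^*,\bfF^*,\bfE^*}(A^*-\bar z_0 I)=R^*\bigl(\ker(I-B^*R^*)\bigr)$, with $R^*$ injective on $\ker(I-B^*R^*)$, whence $r'=\dim\ker(I-B^*R^*)_{\bfF^*}$. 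Now $I-B^*R^*=(I-RB)^*$, and $I-RB$ is a finite-rank perturbation of the identity on $\bfF$, hence Fredholm of index zero with closed range, so $\dim\ker(I-B^*R^*)=\dim\ker\bigl((I-RB)^*\bigr)=\dim\coker(I-RB)=\dim\ker(I-RB)_{\bfF}=\dim\ker(I-BR)_{\bfE}$. The same Lemma~\ref{lemma-jk} computation on the $A$-side identifies $\frakM_{\varOmega,\bfE,\bfF}(A-z_0 I)$ with $\ker(I-BR)_{\bfE}$, whose dimension is $r$ by Theorem~\ref{theorem-m}(3). Therefore $r'=r$.

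\emph{Infinite case, and the main obstacle.} When $r=\infty$, the statement that $z_0$ has infinite rank amounts to: no finite-rank $B\in\scrB_{00}(\bfF,\bfE)$ lies in $\scrQ_{\varOmega,\bfE,\bfF}(A-z_0 I)$, and one must prove the analogous emptiness for $A^*$. Given a hypothetical finite-rank $C\in\scrB_{00}(\bfE^*,\bfF^*)\cap\scrQ_{\varOmega^*,\bfF^*,\bfE^*}(A^*-\bar z_0 I)$, reflexivity of $\bfE$ (so $\bfE^{**}=\bfE$) lets one write $C$ as the adjoint of a finite-rank $D\colon\bfF\to\bfE$ of the same rank, with $(A+D)^*=A^*+C$ on $\bfX^*$, so that $\sigma(A+D)=\overline{\sigma(A^*+C)}$ and the spectral part of ``$D$ is a witness for $z_0$ relative to $(\varOmega,\bfE,\bfF)$'' follows. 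The main obstacle is the remaining convergence claim, namely that $\wlim_{z\to z_0,\,z\in\varOmega}(A+D-z I)^{-1}$ exists \emph{as a mapping $\bfE\to\bfF$}: the duality estimate gives $(A+D-z I)^{-1}\colon\bfE\to\bfF$ bounded uniformly near $z_0$, and for $f^*\in\bfF^*$, $e\in\bfE$ the scalars $\langle f^*,\,(A+D-z I)^{-1}e\rangle=\langle(A^*+C-\bar z I)^{-1}f^*,\,e\rangle$ do converge, but a bounded net in $\bfF$ that converges weak-$*$ in $\bfF^{**}$ need not have its limit inside $\bfF$ unless $\bfF$ (equivalently $\bfF^*$) is reflexive, which is not assumed. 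To pin the limit down inside $\bfF$ I would exploit more than mere boundedness: the net $(A+D-z I)^{-1}e$ lies in $\bfX\subset\bfF$, is bounded in the $\hatA$-graph norm because $\hatA(A+D-z I)^{-1}e=e+z(A+D-z I)^{-1}e-D(A+D-z I)^{-1}e$ is $\bfF$-bounded, and is already determined on the dense subspace $\bfX$; combining this with the $\hatA$-graph density built into Assumption~\ref{ass-virtual} and the closedness of $\hatA$ should confine the limit to $\range\bigl((A+D-z_0 I)^{-1}_{\varOmega,\bfE,\bfF}\bigr)\subset\bfF$ and so contradict $\rank(z_0,A)=\infty$. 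This step, where the reflexivity hypothesis on $\bfE$ and the finite-rank structure of the perturbation are genuinely used, is the delicate heart of the argument; the rest is bookkeeping with the duality identity, Lemma~\ref{lemma-jk}, and Theorem~\ref{theorem-m}.
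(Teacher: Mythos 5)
The paper states this lemma without proof (the details being deferred to the companion paper \cite{virtual-levels}), giving only the remark after the statement that the reflexivity of $\bfE$ is what lets the weak-operator limit of $(A-zI)^{-1}:\bfE\to\bfF$ pass to a weak-operator limit of $(A^*-\zeta I)^{-1}:\bfF^*\to\bfE^*$. Your forward step is exactly that remark, carried out correctly: $\bfE$ reflexive gives $\bfE^{**}=\bfE$, so the weak operator topology on $\mathscr{B}(\bfF^*,\bfE^*)$ is tested against $\bfF^*\times\bfE$, and the duality identity $\langle(A^*+B^*-\zeta I)^{-1}\eta,e\rangle=\langle\eta,(A+B-\bar\zeta I)^{-1}e\rangle$ transfers the limit. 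Your finite-rank reverse step is also correct and is a genuinely nice route: Lemma~\ref{lemma-jk} on both sides identifies the two virtual-state spaces with $\ker(I-B^*R^*)_{\bfF^*}$ and $\ker(I-BR)_{\bfE}$; since $I-RB$ is a finite-rank perturbation of the identity on $\bfF$, it is Fredholm of index zero with closed range, so $\dim\ker(I-RB)^*=\dim\coker(I-RB)=\dim\ker(I-RB)=\dim\ker(I-BR)$; then Theorem~\ref{theorem-m}\itref{theorem-m-3} closes the circle. I checked the bookkeeping ($R$ injective on $\bfE$ by left-invertibility, the $RB\leftrightarrow BR$ kernel bijection via $\psi\mapsto B\psi$, etc.) and it holds.

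The genuine gap is the infinite-rank case, which is part of the statement ($r\in\N_0\sqcup\{\infty\}$) and which you yourself flag as unresolved. The sketch you offer does not close. Given a finite-rank witness $C$ for $\bar z_0$ for $A^*$ and its predual $D$ (which does exist, as you say, because $\bfE$ reflexive lets you take $D=C^*|_{\bfF}$), the net $u_z=(A+D-zI)^{-1}e$ is bounded in $\bfF$ and converges weak-$*$ in $\bfF^{**}$, and the identity $\hatA u_z=e+zu_z-Du_z$ indeed bounds $\hatA u_z$ in $\bfF$. But a bounded net in $\bfF$ need not have a weak cluster point in $\bfF$ (only a weak-$*$ cluster point in $\bfF^{**}$), and the graph of $\hatA$ is norm-closed, hence weakly closed, in $\bfF\times\bfF$, but it has no reason to be weak-$*$ closed when sitting inside $\bfF^{**}\times\bfF^{**}$. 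Nothing in Assumption~\ref{ass-virtual} (the density condition is with respect to the $\hatA$-graph norm in $\bfF$, not a weak-$*$ statement in $\bfF^{**}$) lets you conclude that $u_\infty\in\bfF$, let alone that it lies in $\range\big((A+D-z_0I)^{-1}_{\varOmega,\bfE,\bfF}\big)$. In short: $\bfE$ reflexive suffices to pull information from $A$ to $A^*$, but pulling it back from $A^*$ to $A$ in the infinite-rank case needs either $\bfF$ (equivalently $\bfF^*$) reflexive, or a different argument that you have not supplied; the paper does not give one either, so this remains open in your proposal.
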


Above, the assumption
that $\bfE$ is reflexive is needed so that
the existence of a limit of the resolvent
$(A-z I)^{-1}$, $z\to z_0$, $z\in\varOmega$,
in the weak operator topology of mappings
$\bfE\to\bfF$
also provides
the existence of a limit of
$(A^*-\zeta I)^{-1}$, $\zeta\to\bar z_0$, $\bar\zeta\in\varOmega$,
in the weak operator topology of mappings
$\bfF^*\to\bfE^*$.

\section{Application to the Schr\"odinger operators}

Let us illustrate how our approach
from Section~\ref{sect-general}
can be applied to the study of properties of virtual states
and LAP estimates
of Schr\"odinger operators
with nonselfadjoint potentials.
According to the developed theory,
it suffices to derive the estimates for model operators.
If $d\ge 3$, one derives optimal LAP estimates
for the Laplacian
(see \cite[Proposition 2.4]{MR0368656} and \cite[\S 3.3]{virtual-levels});
for $d=2$,
one considers the regularization
$-\Delta+\vargamma\unity_{\mathbb{B}^2_1}$, $0<\vargamma\ll 1$,
destroying the virtual level at $z_0=0$
\cite[\S 3.2]{virtual-levels}.
For $d=1$,
one could proceed in the same way as for $d=2$,
although
the estimates can be derived directly for any $V\in L^1_1(\R,\C)$
\cite[\S 3.1]{virtual-levels}.
The resulting estimates will be valid for all complex-valued potentials
when there are no virtual levels.
When there are virtual levels,
then the corresponding virtual states
can be characterized as functions
in the range of the regularized resolvent (Theorem~\ref{theorem-m}).


\begin{theorem}\label{theorem-sh}
Assume that $V\in L^\infty_\rho(\R^d,\C)$,
$\rho>2$;
if $d=1$, it suffices to have $V\in L^1_1(\R^1,\C)$
\textup(see \eqref{def-lp}\textup).
Let $A=-\Delta+V$
in $L^2(\R^d)$, $\dom(A)=H^2(\R^d)$,
$d\ge 1$.
\begin{itemize}
\item[$\bullet$]
If $z_0=0$ is not a virtual level of $A$
relative to $\varOmega=\C\setminus\overline{\R_{+}}$,
$\bfE=L^2_s(\R^d)$, $\bfF=L^2_{-s'}(\R^d)$,
with $s,\,s'>0$ sufficiently large,
then the following mappings are continuous:
\[
(A-z_0 I)^{-1}_\varOmega:
\:
L^2_s(\R^d)\to L^2_{-s'}(\R^d),
\qquad
\begin{cases}
s+s'\ge 2,\ s,\,s'>\fra 1 2,
&d=1;
\\
s,\,s'>2-\fra d 2,\ s,\,s'\ge 0,
&d\ge 2.
\end{cases}
\]
Moreover, for $1\le d\le 3$,
\[
(A-z_0 I)^{-1}_\varOmega:
\:
L^1(\R^d)\to L^2_{-s}(\R^d),\quad
L^2_s(\R^d)\to L^\infty(\R^d),
\quad
\forall s>2-d/2.
\]
\item[$\bullet$]
If $z_0=0$ is a virtual level of $A$,
then there is a nonzero solution to the following problem:
\[
(A-z_0 I)\Psi=0,
\qquad
\Psi\in
\begin{cases}
L^\infty(\R^d),&d\le 2;
\\
L^\infty(\R^d)\cap L^2_{-1/2-0}(\R^d),&d=3;
\\
L^2_{-0}(\R^d),&d=4;
\\
L^2(\R^d),&d\ge 5.
\end{cases}
\]
\end{itemize}
\end{theorem}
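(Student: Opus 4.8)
The plan is to reduce everything to a \emph{model operator} $A_0$ for which $z_0=0$ is a regular point of the essential spectrum relative to $\varOmega=\C\setminus\overline{\R_{+}}$, and then to transport the conclusions to $A=-\Delta+V$ by the machinery of Section~\ref{sect-general}. For $d\ge 3$ I would take $A_0=-\Delta$; for $d=2$, $A_0=-\Delta+\vargamma\unity_{\mathbb{B}^2_1}$ with $0<\vargamma\ll 1$ (a small positive bump destroys the zero-energy resonance of the free Laplacian); for $d=1$ one may proceed as for $d=2$, or work directly with a regularization $A_0=-\p_x^2+V_0$ whose Jost solutions $\theta_\pm$ are linearly independent, as in \eqref{gg} and Example~\ref{example-laplace-1d}. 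In each case write $A=A_0+W$, with $W=V$ ($d\ge 3$) or $W=V-\vargamma\unity_{\mathbb{B}^2_1}$ ($d\le 2$); since $V\in L^\infty_\rho(\R^d,\C)$ with $\rho>2$ (or $V\in L^1_1(\R,\C)$ for $d=1$), the operator $W$ of multiplication is $\hatA$-compact and $\hatA_0$-compact in the sense of Definition~\ref{def-compact}, so Theorems~\ref{theorem-lap} and \ref{theorem-m} apply with $B=\pm W$.

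First I would record the LAP estimates for the model operators; these are classical and are also derived in \cite[\S 3]{virtual-levels}. For $d\ge 3$ the free resolvent $R_0^{(d)}(z)$ --- convolution with a kernel $\sim\abs{x-y}^{2-d}e^{-\sqrt{-z}\abs{x-y}}$ --- stays bounded and converges, as $z\to 0$ in $\C\setminus\overline{\R_{+}}$, in the uniform operator topology of mappings $L^2_s(\R^d)\to L^2_{-s'}(\R^d)$ for $s,s'>2-d/2$ ($s,s'\ge 0$), the limit $R_0^{(d)}(0)$ being convolution with $c_d\abs{x-y}^{2-d}$; for $1\le d\le 3$ the same limiting operator also maps $L^1\to L^2_{-s}$ and $L^2_s\to L^\infty$ for $s>2-d/2$, by an elementary Schur/Young estimate on its kernel. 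For $d=1$, the limiting resolvent of $A_0$ at $z_0=0$ has the kernel \eqref{gg} built from $\theta_\pm$, and weighting that kernel against $L^2$ data gives $L^2_s\to L^2_{-s'}$ for $s+s'\ge 2$, $s,s'>1/2$, together with $L^2_s\to L^\infty$ for $s>3/2$, and a strong limit as $z\to 0$ (cf.\ Example~\ref{example-laplace-1d}); for $d=2$ one checks in the same spirit, using the two-dimensional free resolvent together with a Fredholm argument, that $-\Delta+\vargamma\unity_{\mathbb{B}^2_1}$ has $z_0=0$ as a regular point with the analogous bounds.

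Next, the regular case: suppose $z_0=0$ is \emph{not} a virtual level of $A$ relative to $(\varOmega,\bfE,\bfF)=(\C\setminus\overline{\R_{+}},L^2_s,L^2_{-s'})$ for $s,s'$ large, which says exactly that $z_0=0$ is a regular point of $A$. Since $z_0=0$ is then a regular point of $A=A_0+W$ and $W$ is $\hatA_0$-compact, one has $W\in\scrQ_{\varOmega,\bfE,\bfF}(A_0-z_0 I)$ by definition; applying Theorem~\ref{theorem-lap}~\itref{theorem-lap-3} to the regular operator $A_0$ with perturbation $W$ then yields at once $\range\big((A-z_0 I)^{-1}_\varOmega\big)=\range\big((A_0-z_0 I)^{-1}_\varOmega\big)$, the convergence of $(A-z I)^{-1}$ in the same (strong or uniform) operator topology as for $A_0$, and the persistence for $A$ of every extension of the model limit to an intermediate pair $\bfE_0,\bfF_0$ with $\bfE\hookrightarrow\bfE_0\hookrightarrow\bfX\hookrightarrow\bfF_0\hookrightarrow\bfF$. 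Substituting the model bounds of the previous paragraph then produces the asserted continuity of $(A-z_0 I)^{-1}_\varOmega$ on the weighted $L^2$ spaces in the stated dimension-dependent ranges and, for $1\le d\le 3$, the $L^1\to L^2_{-s}$ and $L^2_s\to L^\infty$ bounds.

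Finally, the virtual-level case: suppose $z_0=0$ is a virtual level of $A$ of rank $r\in\N\sqcup\{\infty\}$ relative to $(\varOmega,\bfE,\bfF)$. Taking $B=-W$ makes $A+B=A_0$, for which $z_0=0$ is regular, so $-W\in\scrQ_{\varOmega,\bfE,\bfF}(A-z_0 I)$ (hence nonempty), and Theorem~\ref{theorem-m} gives a space of virtual states of dimension $r\ge 1$,
\[
\frakM_{\varOmega,\bfE,\bfF}(A-z_0 I)
=\big\{\Psi\in\range\big((A_0-z_0 I)^{-1}_{\varOmega,\bfE,\bfF}\big)\sothat(\hatA-z_0 I)\Psi=0\big\};
\]
by Theorem~\ref{theorem-a} this space (and the rank) does not change if $(\bfE,\bfF)$ is replaced by any admissible pair, and I would choose weighted $L^2$ spaces with $\bfF=L^2_{-s'}$, $s'$ just above $\max(0,2-d/2)$, the hypothesis $\scrQ\ne\emptyset$ for the new pair being again witnessed by $B=-W$. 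Fix a nonzero $\Psi$ in this space. For $d\le 2$ one reads off at once $\Psi\in\range\big((A_0-z_0 I)^{-1}_\varOmega\big)\subset L^\infty(\R^d)$ from the model bound $(A_0-z_0 I)^{-1}_\varOmega\colon L^2_s\to L^\infty$. For $d\ge 3$, from $-\Delta\Psi+V\Psi=(\hatA-z_0 I)\Psi=0$ in $\scrD'(\R^d)$ one gets $(\hatA_0-z_0 I)\Psi=-V\Psi$; since $\Psi\in L^2_{-s'}$ with $s'$ small while $\abs{V(x)}\lesssim\langle x\rangle^{-\rho}$, $\rho>2$, one has $V\Psi\in L^2_{\rho-s'}\subset\bfE$, and since, by Lemma~\ref{lemma-jk}, $(A_0-z_0 I)^{-1}_\varOmega=R_0^{(d)}(0)$ is the two-sided inverse of $\hatA_0-z_0 I$ on its range, it follows that $\Psi=-R_0^{(d)}(0)(V\Psi)$ --- minus the convolution of the rapidly decaying function $V\Psi$ with $c_d\abs{x-y}^{2-d}$. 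Hence $\Psi(x)\sim c\abs{x}^{2-d}$ as $\abs{x}\to\infty$, placing $\Psi$ in $L^\infty\cap L^2_{-1/2-0}(\R^3)$ for $d=3$, in $L^2_{-0}(\R^4)$ for $d=4$, and in $L^2(\R^d)$ for $d\ge 5$. The step I expect to be the main obstacle is the dimension-by-dimension weight bookkeeping --- above all the borderline exponents ($d=3,4$), where the ``$-0$'' records that the endpoint weight is not attained while every larger one is; this needs the sharp Agmon--H\"ormander/Jensen--Kato free-resolvent estimates together with the extension statement Theorem~\ref{theorem-lap}~\itref{theorem-lap-3-c}. A secondary difficulty is the $d=2$ regularization: one must verify that $\vargamma\unity_{\mathbb{B}^2_1}$ genuinely removes the threshold resonance of $-\Delta$ with the required mapping bounds, after which the arbitrary $V$ is reached through the relatively compact perturbation $W=V-\vargamma\unity_{\mathbb{B}^2_1}$.
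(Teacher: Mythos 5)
Your proposal follows exactly the route that the paper sketches in the paragraph preceding Theorem~\ref{theorem-sh}: reduce to a model operator ($-\Delta$ for $d\ge 3$, the regularization $-\Delta+\vargamma\unity_{\mathbb{B}^2_1}$ for $d=2$, a Jost-regularized one-dimensional operator for $d=1$), derive the LAP bounds for the model, and transport them to $A=-\Delta+V$ via Theorems~\ref{theorem-lap}, \ref{theorem-m}, and \ref{theorem-a}, using Lemma~\ref{lemma-jk} to obtain the bootstrap identity $\Psi=-R_0^{(d)}(0)(V\Psi)$ that fixes the decay of virtual states in $d\ge 3$. The paper itself gives only this outline, deferring the weight bookkeeping to \cite[\S 3]{virtual-levels}; your argument fills in precisely those steps, and the dimension-by-dimension checks (the $s,s'$ and $L^1$/$L^\infty$ ranges, the $L^2_{-1/2-0}$, $L^2_{-0}$, and $L^2$ memberships) are consistent with the stated conclusions.
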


For more details and references, see \cite{virtual-levels}.
Related results on properties of eigenstates
and virtual states
are in
\cite{gesztesy2020absence}
(Schr\"odinger and massive Dirac operators in dimension $d\ge 3$
and massless Dirac operators in $d\ge 2$)
and in \cite[Theorem 2.3]{barth2020efimov}
(Schr\"odinger operators in $d\le 2$).
Let us note that,
prior to \cite{virtual-levels},
the nonselfadjoint case has not been considered
(although some results appeared in \cite{cuccagna2005bifurcations}).
Moreover, as far as we know,
even in the selfadjoint case,
the LAP in dimension $d=2$
at the threshold
when it is a regular point
of the essential spectrum
was not available.
Although the 
$L^1\to L^2_{-s}$ and $L^2_s\to L^\infty$ estimates
stated above are straightforward
in dimension $d=1$ and $d=3$,
we also do not have a reference.



\begin{remark}
According to
Theorem~\ref{theorem-a},
the absence of uniform estimates of the form
$(-\Delta-z I)^{-1}:\,L^p(\R^d)\to L^q(\R^d)$
for $z\in\C\setminus\overline{\R_{+}}$
for $d\le 2$ \cite{kwon2020sharp}
is directly related to the fact that
there is a virtual level of $-\Delta$
at $z_0=0$ in dimensions $d\le 2$
relative to
$\big(\C\setminus\overline{\R_{+}},\,
L^2_s(\R^d)$, $L^2_{-s'}(\R^d)\big)$,
with arbitrarily large $s,\,s'\ge 0$.
\end{remark}


\begin{example}
Since $\Psi\equiv 1$ is an $L^\infty$-solution to
$\p_x^2 u=0$,
by Theorem~\ref{theorem-sh}, $z_0=0$ is not a regular point
of the essential spectrum of the Laplacian in $\R$
relative to
$\big(\C\setminus\overline{\R_{+}},\,L^2_s(\R),\,L^2_{-s'}(\R)\big)$,
with $s,\,s'>1/2$, $s+s'\ge 2$.

Now let us show that $z_0=0$ is a virtual level of rank $r=1$
(relative to the same triple
$\big(\C\setminus\overline{\R_{+}},\,L^2_s(\R),\,L^2_{-s'}(\R)\big)$).
Consider a rank one perturbation of the Laplacian,
\[
A=-\p_x^2+\unity_{[-1,1]}\otimes
\langle\unity_{[-1,1]},\cdot\rangle,
\qquad
A\in\scrC(L^2(\R)),
\qquad
\dom(A)=H^2(\R),
\]
with $\unity_{[-1,1]}$ the characteristic function
of the interval $[-1,1]$.
We claim that $z_0=0$ is a regular point of
$\sigma\sb{\mathrm{ess}}(A)$.
Indeed, the relation $A u=0$ takes the form
\begin{eqnarray}\label{hu-1}
u''(x)=c\unity_{[-1,1]}(x),
\qquad
x\in\R,
\qquad
c:=\int_{-1}^1u(y)\,dy.
\end{eqnarray}
The requirement $u\in L^\infty(\R)$ implies that
$u(x)=a_{-}$ for $x<-1$ and $u(x)=a_{+}$ for $x>-1$,
with some $a\sb\pm\in\C$;
for $-1<x<1$, one has $u=a+bx+c x^2/2$, 
with some $a,\,b\in\C$.
The continuity of the first derivative at $x=\pm 1$
leads to $b-c=0$ and $b+c=0$, hence $b=c=0$;
at the same time, the relation
$0=c=\int_{-1}^1 a\,dx$ implies that $a=0$ and thus
$u(x)$ is identically zero.
Hence, there is no nontrivial $L^\infty$-solution to \eqref{hu-1}.
By Theorem~\ref{theorem-sh},
$z_0=0$ is a regular point of $\sigma\sb{\mathrm{ess}}(A)$,
hence it is a virtual level
of rank one of $-\p_x^2$.
\end{example}

\begin{acknowledgements}
The authors are most grateful to
Gregory Berkolaiko,
Kirill Cherednichenko,
Fritz Gesztesy,
Bill Johnson,
Alexander V. Kiselev,
Mark Malamud,
Alexander Nazarov,
Yehuda Pinchover,
Roman Romanov,
Thomas Schlump\-recht,
Vladimir Sloushch,
Tatiana Suslina,
Cyril Tintarev,
Boris Vainberg,
and Dmitrii Yafaev
for their attention and advice.
The authors are indebted to the anonymous referee
for bringing to their attention
several important references.
\end{acknowledgements}

%

\noindent
{\small
{\bf Conflict of interest.\ }
The authors declare that they have no conflict of interest.
}


\bibliographystyle{sima-doi}
\bibliography{bibcomech}   
\end{document}